\theoremstyle{plain}
\newtheorem{theorem}{Theorem}[section]
\theoremstyle{plain}
\newtheorem{prop}[theorem]{Proposition}
\theoremstyle{plain}
\newtheorem{coro}[theorem]{Corollary}
\theoremstyle{plain}
\newtheorem{lemma}[theorem]{Lemma}
\theoremstyle{definition}
\newtheorem{defi}[theorem]{Definition}
\theoremstyle{definition}
\theoremstyle{definition}
\newtheorem{example}[theorem]{Example}
\newtheorem{recollection}[theorem]{Recollection}
\theoremstyle{remark}
\newtheorem{remark}[theorem]{Remark}
\theoremstyle{definition}
\newcommand{\sbz}[1]{\Sigma B\mathbb{Z}/#1}
\newcommand{\bz}[1]{B\mathbb{Z}/#1}
\newcommand{\z}[1]{\mathbb{Z} #1}
\newcommand{\map}{\mathrm{map}}
\newcommand{\Q}{\mathbb{Q}}
\newcommand{\Hom}{\operatorname{Hom}}
\newcommand{\p}[2]{#1^{\wedge}_{#2}}
\newcommand{\mapp}{\mbox{map}_{\ast}}
\newcommand{\ev}{\operatorname{ev}}
\newcommand{\Ima}{\operatorname{Im}}
\newcommand{\holim}[1]{\operatorname{holim}_{#1}}
\newcommand{\F}{\mathcal{F}}
\newcommand{\ob}{\operatorname{Ob}}
\newcommand{\Inj}{\operatorname{Inj}}
\newcommand{\Aut}{\operatorname{Aut}}
\newcommand{\Inn}{\operatorname{Inn}}
\newcommand{\Out}{\operatorname{Out}}
\newcommand{\Lin}{\mathcal{L}}
\newcommand{\Linp}{B\F}
\newcommand{\Mor}{\operatorname{Mor}}
\newcommand{\Syl}{\operatorname{Syl}}
\newcommand{\Res}{\operatorname{Res}}
\newcommand{\Ind}{\operatorname{Ind}}
\newcommand{\Iso}{\operatorname{Iso}}
\newcommand{\OF}{\mathcal{O(F)}}
\newcommand{\OFc}{\mathcal{O}(\mathcal{F}^{c})}
\newcommand{\OFp}{\mathcal{O}^{p}_{\mathcal{F}}(S)}
\newcommand{\Rep}{\operatorname{Rep}}
\newcommand{\Topo}{\operatorname{Top}}
\newcommand{\hocolim}{\operatorname{hocolim}}
\newcommand{\sz}{\operatorname{Sz}}
\newcommand{\bsz}{B \! \operatorname{Sz}}
\DeclareRobustCommand\longtwoheadrightarrow
\title{Cellular approximations of fusion systems}
\author[G. Carrión Santiago]{Guille Carrión Santiago}
\address{Departamento de Matemática Aplicada, Ciencia e Ingeniería de los Materiales y Tecnología\linebreak Electrónica, ESCET, Universidad Rey Juan Carlos, 28933 Móstoles (Madrid), Spain}
\email{Guille.CarrionS@urjc.es}
\author[N. Castellana]{Natàlia Castellana}
\address{Departament de Matemàtiques, Universitat Autònoma de Barcelona, E-08193 Bellaterra, Spain}
\email{Natalia.Castellana@uab.cat}
\author[A. Gavira-Romero]{Alberto Gavira-Romero}
\subjclass[2020]{ 
55P60, 
55R35, 
18M20. 
}
\keywords{
Nullification, Cellularization, Kernel of a map, Finite p-groups.
}
\date{}
\begin{document}
\begin{abstract}
In this paper we study the cellularization of classifying spaces of saturated fusion systems with respect to classifying spaces of finite $p$-groups. We give an explicit criteria to decide when a classifying space is cellular and we explicitly compute the cellularization for a family of exotic examples.
\end{abstract}
\maketitle
\section*{Introduction}
In the 1990s, E. Dror-Farjoun and W. Chach\'olski generalized the concept of $CW$-complex, spaces built from spheres by means of pointed homotopy colimits. Let $A$ be a pointed space and 
let $\mathcal{C}(A)$ denote the smallest collection of pointed spaces that contains $A$ and it is closed under weak equivalences and pointed homotopy colimits. A pointed space $X$ is
\textit{$A$-cellular} if $X \in \mathcal{C}(A)$. Later, this notion was extended to stable homotopy theory. In a similar way, the same notion is considered in stable homotopy theory.
\medskip

Let $G$ be a finite group, $p$ a prime such that $p\mid {|G|}$ and $S\leq G$ a Sylow $p$-subgroup. The fact that $\p{BG}{p}$ is a stable retract of $BS$ implies that $\Sigma^\infty_+\p{BG}{p}$ belongs to $\mathcal C(\Sigma^\infty_+BS)$.
In unstable homotopy theory, $\p{BG}{p}$ is not a retract of $BS$, but we can ask ourselves whether $\p{BG}{p}$ is in the cellular class $\mathcal C(BS)$. More generally, given a finite $p$-group $P$, when $\p{BG}{p}$ belongs to $\mathcal C(BP)$? 
Previous works in finite groups (see \cite{MR2272149}, \cite{MR2351607} and \cite{MR2823972}) suggest the strong relationship between the cellularity properties of $\p{BG}{p}$ with respect to classifying spaces of finite $p$-groups and the fusion structure of $G$ at the prime $p$.\medskip

The homotopy type of $\p{BG}{p}$ is determined by its $p$-local structure, the fusion system associated to $G$. An abstract notion of \emph{fusion system} over a finite $p$-group $S$ was developed by L. Puig in an unpublished work. Afterwards, D. Benson suggested the idea of associating a 
classifying space to each saturated fusion system (see \cite{Benson}); which was formulated and developed by C. Broto, R. Levi and B. Oliver
in \cite{MR1992826}. The existence and uniqueness of the classifying space were proved by A. Chermak 
\cite{MR3118305}, see also Oliver \cite{Oliver-ExistenceL}. Previously, K. Ragnarsson \cite{MR2199459} constructed a classifying space spectrum $\mathbb B\F$ associated to $\F$ by splitting the spectrum of $BS$ via an idempotent stable selfmap. Analogously to the situation for finite groups, we have then $\mathbb B\F\in \mathcal C({\Sigma_+^\infty}BS)$. \medskip

In this work, we address the question of when $B\F$, the classifying space of $\F$, is $BP$-cellular, and we obtain the following result, which the main theorem of the paper.

\medskip

\noindent{\bf \cref{t:BP << Lp}.}
\noindent {\it  Let $(S,\F)$ be a saturated fusion system and let $P$ be a finite $p$-group. Then $\Linp$ is $BP$-cellular if and only if $S = Cl_{\F}(P)$, where $Cl_\F(P)$ is the smallest strongly $\F$-close subgroup of $S$ which contains all the images of homomorphisms $P\to S$
}
\medskip

From this theorem we obtain the following consequences.
\medskip

\noindent{\bf \cref{c:BS << Lp}.}
\noindent {\it 
Let $(S,\F)$ be a saturated fusion system and let $P$ be a finite $p$-group.
\begin{enumerate}[(a)]
\item The classifying space $\Linp$ is $BS$-cellular.
\item Let $(S,\F')$ be a saturated fusion system with $\F\subset \F'$. If $B\F$ is $BP$-cellular then $B\F'$ is also $BP$-cellular. In particular, if $BS$ is $BP$-cellular then so is $\Linp$.
\item Let $Q\twoheadrightarrow P$ be an epimorphism of finite $p$-groups. If $\Linp$ is $BP$-cellular, then it is also $BQ$-cellular. 
\item Let $A$ be a pointed connected space. If $Cl_\F((\pi_1 A)_{ab})=S$, then $\Linp$ is $A$-cellular.
\item Let $\Omega_{p^m}(S)$ be the (normal) subgroup of $S$ generated by its elements of order $p^i$, with $i \leq m$. Then $\Linp$ is $\bz{p^m}$-cellular if and only if 
$S = Cl_{\F}(\Omega_{p^m}(S))$. In particular, there is a non-negative integer $m_0 \geq 0$ such that $\Linp$ is $\bz{p^m}$-cellular for all $m \geq m_0$.
\end{enumerate}
}
\medskip

The key concept to approach \cref{t:BP << Lp} is the kernel of a map introduced by  D. Notbohm \cite{MR1286829}. This construction allows us to characterize null-homotopic maps:
\medskip

\noindent{\bf \cref{t:Dwyer p local}.}
\noindent {\it Let $(S,\F)$ be a saturated fusion system, and $Z$ be a connected $p$-complete \mbox{$\sbz{p}$-null} space. A map $f \colon \Linp \rightarrow Z$ is 
null-homotopic if and only if $\ker(f) = S$.
}\medskip

Moreover, given a map $f \colon B\F \rightarrow Z$, we show that $\ker(f)$ is a strongly $\F$-closed subgroup of $S$. Furthermore, any strongly $\F$-closed subgroup $K\leq S$ is the kernel of a map.

The last two sections are devoted to give explicit examples. One special case is when $Cl_{\F}(S)$ is normal in $\F$. 
\medskip

\noindent{\bf \cref{c:normal}.}
\noindent {\it Let $(S,\F)$ be a fusion system and let $P$ be a finite $p$-group. If $Cl_{\F}(P) \vartriangleleft \F$, then $CW_{BP} (\Linp)$ is homotopy equivalent to the homotopy
fiber of $\Linp \rightarrow B(\F/Cl_{\F}(P))$.
}
\medskip

This result allow us to compute, for all $r \geq 1$, the $\bz{p^r}$-cellularization of the classifying space of $\z{/p^n} \wr \z{/q}$, with $p \neq q$, and of the Suzuki group 
$\sz(2^n)$, with $n$ an odd integer at least $3$ (Example~\ref{e:NG(S) controls fus}).

The last section contains an explicit description of the $\bz{3^l}$-cellularization of the classifying space of a family of exotic fusion systems over a finite $3$-group given in \cite{MR2272147}. \\[-.25cm]

\noindent{\bf \cref{c:exotic example}.}
\noindent {\it Let $\F$ be an exotic fusion system over $B(3,r;0,\gamma,0)$ such that $\F$ has at least one $\F$-Alperin rank two elementary abelian $3$-subgroup given in \cite[Theorem 5.10]{MR2272147}.
Then 
\begin{enumerate}[(i)]
\item If $\gamma = 0$, then $\Linp$ is $\bz{3^l}$-cellular for all $l \geq 1$.
\item If $\gamma \neq 0$, then $\Linp$ is $\bz{3^l}$-cellular if and only if $l \geq 2$.
\end{enumerate}
}
\medskip

Moreover, when $\gamma \neq 0$ and $l=1$, $Cl_{\F}(\z{/3})=\langle s,s_2\rangle$ and $CW_{\bz3}(B\F)$ is the homotopy fiber of a map $B\F\rightarrow \p{(B\Sigma_3)}{3}$.
\medskip

{\bf Outline of this paper.} In \cref{s:fus sys} we review the notion of the fusion system and its homotopy properties needed for the rest of the paper. \cref{s: Homotopy properties of cellular spaces} describes we review homotopy properties of cellular spaces. The notion of the kernel of a map and its properties are studied in \cref{s:ker f}. In \cref{s:cw BF}, we prove the main theorem and its corollaries. Finally, in \cref{s:exotic example} we give a complete description of the cellularization of classifying spaces for a family of exotic fusion system\medskip

\noindent
{\bf Acknowledgments.} 

The first author wishes to thank the Department of Mathematics at the Universitat Autònoma de Barcelona for its hospitality. Their work was supported by the Universidad de Málaga under grant G RYC-2010-05663, as well as by the Spanish Ministry of Science and Innovation (MICINN) grant PID2023-149804NB-I00.
The first two authors are partially funded by the Comissionat per Universitats i Recerca de la Generalitat de Catalunya (grant No. 2021-SGR-01015). The second author is partially founded by MICINN grant  PID2024-158573NB-I00.
The third author is also partially supported by the Xunta de Galicia through the Proxecto Emerxente EM 2013/016.\medskip

\section{The homotopy theory of fusion systems}\label{s:fus sys}
In this section we introduce the definition and homotopical properties of fusion systems needed for the rest of this paper. Briefly, a saturated fusion system is a small subcategory of the category of groups which encodes fusion/conjugacy data between subgroups of a fixed finite $p$-group $S$, as formalized by L. Puig (see \cite{MR2253665}, \cite{MR2848834}). Such a category has a notion of classifying space (which is not the nerve of the category) introduced by C. Broto, R. Levi and B. Oliver (see \cite{MR1992826}) which satisfy many of the rigid homotopy theoretic properties of $p$-completed classifying spaces of finite groups.

\begin{defi}
Let $S$ be a finite $p$-group. A \textit{saturated fusion system on $S$} is a subcategory $\F$ of the category of groups with $\ob(\F)$  the set of all subgroups of $S$ and 
such that it satisfies the following properties. For all $P,Q \leq S$:
\begin{enumerate}[($f.1$)]
\item $\Hom_S (P,Q) \subset \Hom_{\F} (P,Q) \subset \Inj(P,Q)$.
\item Each $\varphi \in \Hom_{\F} (P,Q)$ is the composite of an isomorphism in $\F$ followed by an inclusion.
\end{enumerate}
\begin{enumerate}[$(s.1)$]
\item For all $P \leq S$ which is \textit{fully normalized in $\F$}, $P$ is also \textit{fully centralized in $\F$} (i.e., if $|N_S(P)|\ge |N_S(P')|$, for all $P'$ that is $\F$-conjugate to $P$, then $|C_S(P)| \geq |C_S(P')|$), and 
$\Out_S(P) \in \Syl_p(\Out_{\F}(P))$.
\item Let $P \leq S$ and $\varphi \in \Hom_{\F}(P,S)$ be such that $\varphi(P)$ is fully centralized. If we set
$$
N_{\varphi} = \{g \in N_S(P) \mid \varphi c_g \varphi^{-1} \in \Aut_S(\varphi(P))\},
$$
then there is $\bar{\varphi} \in \Hom_{\F} (N_{\varphi},S)$ such that $\bar{\varphi}|_P = \varphi$.
\end{enumerate}
\end{defi}

\begin{example}
Given a finite group $G$ with a fixed Sylow $p$-subgroup $S$, let $\F_S(G)$ be the category with $\Mor_{\F_S(G)} (P,Q) = \Hom_G (P,Q)$ for all $P,Q \leq S$, where 
\[
\Hom_G (P,Q) = \{\varphi \in \Hom(P,Q) \mid \varphi = c_g \mbox{ for some } g \in G\}.
\]
This category $\F_S(G)$ is a saturated fusion system
(see \cite[Proposition 1.3]{MR1992826}).
\end{example}

\begin{recollection}
Let $R$ be a commutative ring. The Bousfield-Kan $R$-completion functor is a homological localization functor when we restrict to $R$-good spaces (see \cite[p. 205]{MR0365573}), that is, those spaces for which the $R$-completion is $R$-complete. When $R=\z/{p}$, we just say that a space is $p$-good. By \cite[Proposition VII.5.1]{MR0365573}, if the fundamental group of a pointed space $X$ is finite, then $X$ is $p$-good for any prime $p$.
\end{recollection}
The homotopy type of the Bousfield-Kan $p$-completion of the classifying space $BG^\wedge_p$ can also be modelled up to $p$-completion by means of a category, the centric linking system $\Lin_S(G)$ introduced by Broto, Levi and Oliver \cite{MR1961340}, which projects onto a full subcategory of the fusion system $\F_S(G)$.

\begin{example}\label{ex:linking_system_G}
Given a finite group $G$ with a fixed Sylow $p$-subgroup $S$, the centric linking system $\Lin_S(G)$ is the category whose objects are $p$-centric subgroups of $G$ with morphisms defined by $\Mor_{\Lin_S(G)} (P,Q) = \{ x \in G \mid xP^{-1}x \leq Q\}/O^p(C_G(P))\}$ for all $P,Q \leq S$. Recall that a $p$-subgroup $P\leq G$ is $p$-centric if $Z(P')$ is a Sylow $p$-subgroup of $C_G(P')$ for all $P'$ that is $G$-conjugate to $P$. 
The $p$-completion of its nerve recovers  the classifying space, that is $\p{|\Lin_S(G)|}{p} \simeq \p{BG}{p}$ (see \cite{MR1961340}). Moreover, the monomorphism $S\rightarrow \Aut_{\mathcal L_S(G)}(S)$ induces a continuous map $BS\rightarrow |\mathcal L_S(G)|$ through the automorphisms of $S$ in $\mathcal L_S(G)$ which realizes, up to $p$-completion, the inclusion of the Sylow $p$-subgroup.
\end{example}  

This notion can be generalized to an abstract fusion system aiming for a definition of the  classifying space of a saturated fusion system. 

\begin{defi}
\label{def:F-centric subgroup}
Let $(S,\mathcal F)$ be a saturated fusion system. We say that a subgroup $P\leq S$ is $\mathcal F$-centric if  $C_S (P') = Z(P')$ for any $P'\leq S$ which is $\mathcal F$-isomorphic to $P$. We denote by $\mathcal F^c$ the full subcategory of $\mathcal{F}$ on $\mathcal{F}$-centric subgroups.
\end{defi}

The centric linking system $\mathcal L$ associated with a saturated fusion system $\F$ is a category whose objects are the $\F$-centric subgroups of $S$, and it is equipped with a functor ${\pi\colon \mathcal L\to \F^c}$ that is the identity on objects and surjective on morphisms, and, for every $\F$-centric subgroup $P\le S$, a ``distinguished'' monomorphism
\[
\delta_P\colon P\to \Aut_{\mathcal{L}}(P).
\]
verifying some extra axioms, see \cite[Definition 1.7]{MR1992826} for details. The $p$-completion of the nerve of $\mathcal L$ will play the role of the classifying space of $\F$, i.e.,
$\Linp=\p{|\mathcal L|}{p}$.

A key result in the theory of fusion systems is the existence and uniqueness (up to equivalence) of centric linking systems associated to a given saturated fusion system.  
The result was proven first by Chermak in \cite{MR3118305} using the new theory of localities; and another proof was given by Oliver \cite{Oliver-ExistenceL} using the obstruction theory developed in \cite{MR1992826}. 

\begin{theorem}[{\cite{MR3118305, Oliver-ExistenceL}}]
\label{theorem: LL-ExistenceL}
Let $(S,\mathcal F)$ be a saturated fusion system. Up to equivalence, there exists a unique centric linking system $\mathcal{L}$ associated to~$\mathcal F$. 
\end{theorem}

\begin{defi}
Let $(S,\mathcal F)$ be a saturated fusion system. The classifying space $B\mathcal F$ is defined to be $\p{|\mathcal L|}{p}$, and the inclusion of the Sylow $p$-subgroup $\Theta \colon BS \rightarrow \Linp$ is $\p{(\delta_S)}{p}$. 
\end{defi}

The classifying space of a fusion system has good homotopical properties with respect to $p$-completion.
\begin{prop}
\label{prop:hmtpy gps}
Let $(S,\F)$ be a saturated fusion system. Then $\Linp$ is a nilpotent $p$-complete space such that $\pi_i(\Linp)$ and $H_i(\Linp;\mathbb Z)$ are finite $p$-groups for all $i\ge 1$. Moreover, $\pi_1(\Linp) \cong S/\OFp$,
where 
$\OFp := \langle [Q,\mathcal{O}^p(\Aut_{\F}(Q))] \mid Q \leq S \rangle$.
\end{prop}

\begin{proof}
The description of the fundamental group $\pi_1(\Linp)$ is given in \cite[Theorem B]{MR2302515}. The fact that $\Linp$ is $p$-complete follows from 
\cite[Proposition I.5.2]{MR0365573} since the nerve of the associated linking system $|\Lin|$ is $p$-good by \cite[Proposition 1.12]{MR1992826}. Finally, $H_i(\Linp;\mathbb Z)$ and $\pi_i(\Linp)$ are finite 
$p$-groups for all $i\geq 1$ by \cite[Lemma 7.6]{MR2518640}. Furthermore, $\Linp$ is nilpotent according to \cite[Proposition VII.4.3(ii)]{MR0365573}.
\end{proof}
\begin{recollection}
Notice that given a saturated fusion system $(S,\F)$, the classifying space $B\F$ is $\Q$-acyclic and $\mathbb F_q$-acyclic, that is, $\tilde{H}^*(\Linp;\Q)=0$ and $\tilde{H}^*(\Linp;\mathbb F_q)=0$ whenever $(q,p)=1$, since $\Sigma_+^\infty \Linp$ is a stable retract of $\Sigma^\infty_+BS$ (see \cite{MR2199459}).
\end{recollection}

The subgroup $\OFp$ introduced in \cref{prop:hmtpy gps} plays an important role in describing finite coverings of classifying spaces of fusion systems. 

\begin{theorem}[{\cite[Theorem 4.4]{MR2302515}}]
\label{thm:quotients of L}    
Let $(S,\mathcal F)$ be a saturated fusion system and $T$ be a $p$-group such that $\mathcal O_{\mathcal F}^p(S)\unlhd T\leq S$. Then there is a saturated fusions system $\mathcal F_T$ on $T$ such that $B\mathcal F_T$ is homotopy equivalent to the covering space of $B\mathcal F$ with fundamental group $T/\mathcal O_{\mathcal F}^p(S)$.
\end{theorem}

One of the main tools in the study of maps between classifying spaces is the existence of homology decompositions indexed in orbit categories.

\begin{defi}
Let $(S,\mathcal F)$ be a saturated fusion system. The \textit{orbit category} of $\F$ is the category $\OF$ whose objects are the subgroups of $S$, and whose 
morphisms are defined by:
\[
\Mor_{\OF}(P,Q) = \Rep_{\F}(P,Q) := \Inn(Q)\backslash \Hom_{\F}(P,Q).
\]
Then, $\OFc$ denotes the full subcategory of $\OF$ whose objects are the $\F$-centric subgroups of $S$. If $\Lin$ is a centric linking system associated to $\F$, denote by 
$\tilde{\pi}$ the composite functor 
\[
\tilde{\pi} \colon \Lin \overset{\pi}\longrightarrow \F^c \longtwoheadrightarrow \OFc.
\]
\end{defi}

The homotopy type of the nerve of a centric linking system can be described as a homotopy colimit over the orbit category.

\begin{prop}{\cite[Proposition 2.2]{MR1992826}}]
\label{p:L = hocolim}
Let $(S,\F)$ be a saturated fusion system and $\Lin$ be the associated linking system,  $\tilde{\pi}\colon \Lin \rightarrow \OFc$ be the projection functor, and 
$\tilde{B} \colon \OFc \rightarrow \Topo$
be the left homotopy Kan extension over $\tilde{\pi}$ of the constant functor $\Lin \overset{\ast}\rightarrow \Topo$. Then $\tilde{B}$ is a lift of the classifying space functor, that is, $\tilde{B}(P)\simeq BP$, and
\begin{equation*}
|\Lin| \simeq \hocolim_{\OFc} \tilde{B}.
\end{equation*}
In particular, $\Linp \simeq \p{(\hocolim_{\OFc} \tilde{B})}{p}$.
\end{prop}  

One feature of the orbit category we will use is the following.

\begin{prop}
\label{p:|O|=punt}
Let $(S,\F)$ be a saturated fusion system, and $M\in \z{}_{(p)}-\operatorname{Mod}$. Then   
\[
{\lim_{\OFc}}^i \underline{M}=0\mbox{ for all }i>0,
\]
where ${\underline{M}\colon\OFc\rightarrow  \z{}_{(p)}-\operatorname{Mod}}$ is the constant functor with value $M$. In particular, $|\OFc|^\wedge_p\simeq \ast$.
\end{prop}
\begin{proof}
Given $P\leq S$ in $\mathcal F^c$, let $M_P \colon \OFc^{\operatorname{op}} \rightarrow \z{}_{(p)}-\operatorname{Mod}$ be the atomic functor defined by $M_P(Q)=0$ if $Q \neq P$, and $M_P(P)=M$.

We claim that if $\lim^i M_P = 0$ for all $P$ and $i > 0$, then $\lim^i \underline M = 0$ for $i > 0$. This reduction follows from the proof of \cite[Theorem 1.10]{MR1320996}: filtering $\underline M$ as a series of extensions of functors $M_P$, one for each object $P$, and using the long exact sequence for higher limits to deduce the claim.

Finally, higher limits of atomic functors have been studied in \cite[Proposition 6.1 (i),(ii)]{MR1154593}. If $p \nmid |\Out_{\F} (P)|$, then 
\[
{\lim_{\OFc}}^i M_P = \begin{cases}
M^{\Out_{\F} (P)} = M & \mbox{if }i = 0, \\    
0 & \mbox{if }i > 0.
\end{cases}
\]
If $p \mid | \Out_{\F} (P)|$, since the action is trivial, then $ \lim^i M_P = 0$ for $i \geq 0$. Therefore $\lim_{\OFc}^i \underline M= 0$ for $i > 0$.
Finally, in the particular case of $M=\mathbb F_p$, we have $H^i(|\OFc|;M)=\lim^i \underline M = 0$ for all $i>0$. Therefore $|\OFc|^\wedge_p\simeq \ast$.
\end{proof}

\section{Homotopy properties of cellular spaces}\label{s: Homotopy properties of cellular spaces}
In this section we recall the basic definitions in the theory of $A$-homotopy and some of the homotopy properties (see \cite{MR1392221}).

\begin{defi}
Let $A$ be a pointed connected space.
\begin{enumerate}
\item  We say that a space $X$ is \emph{$A$-null} if the evaluation map ${\ev\colon \map(A, X)\to X}$ is a weak equivalence. 
\item We say that a space $X$ is \emph{$A$-acyclic} if ${\ev\colon \map(X, Z)\to Z}$ is a weak equivalence for any $A$-null space $Z$.
\item A pointed map $f\colon Y \rightarrow Z$ is an $A$-equivalence if $f_* \colon \mapp(A,Y) \rightarrow \mapp(A,Z)$ is a weak equivalence.
\item We say that a space $X$ is \emph{$A$-cellular} if for any choice of basepoint in X and for every pointed $A$-equivalence $f\colon Y \rightarrow Z$, the induced map of mapping spaces $f_* \colon \mapp(X,Y) \rightarrow \mapp(X,Z)$ is also a weak equivalence.
\end{enumerate}
\end{defi}

\begin{remark}\label{r:null_loop}
If $X$ is a connected space, then $X$ is $A$-null if and only if $\mapp(A, X)$ is weakly contractible. A connected space $X$ is $\Sigma A$-null if and only if $\map(A,X)_c\rightarrow X$ is a weak equivalence, see \cite[3.A.1]{MR1392221}. This follows from the fact that $X$ is connected and $\Sigma A$-null iff $\Omega X$ is $A$-null, and 
\[
\map(A, \Omega X)\simeq \Omega (\map(A,X))\simeq \Omega(\map(A,X)_c),
\]
where $\map(A,X)_c$ denotes the connected component of $\map(A,X)$ that contains the constant map.
\end{remark}

\begin{theorem} Let $A$ be a connected space.
\begin{enumerate}
\item \cite{MR1257059} There is a \emph{nullification functor} $P_A\colon Spaces \to Spaces$ equipped with a natural transformation $\eta_X\colon X\to P_A(X)$ which is initial with respect to maps to $A$-null spaces. Then $X$ is \emph{$A$-null} (resp. \emph{$A$-acyclic}) if and only if $\eta_X \colon X \simeq P_AX$ (resp. $P_AX\simeq \ast$). 

\item \cite{MR1392221} There is a \emph{cellularization functor} $CW_A\colon Spaces_* \to Spaces_*$  equipped with a natural transformation $c_X\colon CW_A(X)\to X$ which is terminal with respect to maps from $A$-cellular spaces. Then $X$ is \emph{$A$-cellular} if and only if $c_X\colon CW_AX\simeq X$. 
\end{enumerate}
\end{theorem}
The nullification and the cellularization functors are related. The $A$-nullification isolates the information of a space $X$ that is not visible by means of $\mapp(A,X)$, the $A$-cellularization describes to what extent a space can be built using $A$ as building blocks.  
If $X$ is $A$-cellular then it is $A$-acyclic, $P_A(X)\simeq *$. If $X$ is $A$-null, then $CW_A(X)$ is contractible, see \cite[3.B.1]{MR1392221}.
Next theorem combines both nullification and cellularization functors to give an explicit description on how the $A$-information of the space $X$ can be treated.

\begin{theorem}[{\cite{MR1408539}}]
\label{t:Wojtek}
Let $A$ and $X$ be connected spaces. Consider $C$ the homotopy cofiber of the evaluation map $ev \colon \bigvee_{[A,X]_{\ast}} A \rightarrow X$. Then, $CW_{A}(X)$ fits in a homotopy fiber sequence as follows:
\[
CW_{A}(X) \rightarrow X\overset r \rightarrow P_{\Sigma A} C.
\]
\end{theorem}

\begin{coro}\label{c:ifrnull_CW}
Let $A$ and $X$ be connected spaces and $C$ be the homotopy cofiber of the evaluation map $ev \colon \bigvee_{[A,X]_{\ast}} A \rightarrow X$. If $r\colon X \rightarrow P_{\Sigma A}C$ is null-homotopic then $X$ is $A$-cellular.
\end{coro}
\begin{proof}
If $r$ is null-homotopic then its homotopy fiber is $CW_AX\simeq X\times \Omega P_{\Sigma A}C\simeq X \times P_{A}(\Omega C)$. Since $CW_AX$ is $A$-acyclic then $P_{A}(\Omega C)\simeq *$. Finally, if $X$ is connected, the same holds for $P_{\Sigma A}C$, and therefore it is also weakly contractible. The map $CW_A(X)\rightarrow X$ is then a weak equivalence.
\end{proof}

Next, we describe a criteria for detecting when a map from an $A$-cellular space is null-homotopic.

\begin{prop}
\label{p:mapCWtoZ}
Let $X$ and $Y$ be pointed connected spaces. Assume that $X$ is $A$-cellular and $Y$ is $\Sigma A$-null. Then a pointed map $f \colon X \rightarrow Y$ is null-homotopic 
if and only if $f_*\colon [A,X]_*\rightarrow [A,Y]_*$ is trivial.
\end{prop}

\begin{proof}
If $f$ is null-homotopic, then $f \circ g$ is null-homotopic for any map $g \colon A \rightarrow X$. Assume now that $f_*\colon [A,X]_*\rightarrow [A,Y]_*$ is trivial, that is, for any
map $g \colon A \rightarrow X$ the composite $f \circ g$ is null-homotopic. 

Let $C$ be the homotopy cofiber of 
$ev \colon \bigvee_{[A,X]_{\ast}} A \rightarrow X$. Consider the Barratt-Puppe long exact sequence of pointed sets
\[
\left[\Sigma\left({\bigvee}_{[A,X]_{\ast}} A\right),Y\right]_* {\rightarrow} [C,Y]_* \stackrel{p^*}{\rightarrow} [X,Y]_* \stackrel{ev^*}{\rightarrow} \left[{\bigvee}_{[A,X]_{\ast}} A,Y\right]_*
\]
If $ev^*([f])=*$, there is $[\tilde{f}]\in [C,Y]_*$, with $p^*(\tilde{f})\simeq f$. Moreover, if $Y$ is $\Sigma A$-null, $[\Sigma(\vee A),Z]_*={*}$, and then $f\simeq *$ if and only if $\tilde{f}\simeq *$. If we prove that $C$ is $\Sigma A$-acyclic, then $\tilde{f}\simeq *$. But this holds since $X$ is $A$-cellular by hypothesis applying Chachólski's fibration in \cref{t:Wojtek}.
\end{proof}

When dealing with classifying spaces of groups or fusion systems, our interest focus on isolating the information which is purely algebraic when $A=BP$ for a finite $p$-group $P$. The following properties are relevant.

\begin{lemma}\label{lem:Bzpnull->BPnull}
Let $P$ be a finite $p$-group. Then,
\begin{enumerate}
\item if $X$ is a $\bz p$-null space, then $X$ is also $BP$-null;
\item $\Linp$ is $\Sigma BP$-null.
\end{enumerate}
\end{lemma}
\begin{proof}
The first statement follows since $BP$ is $\bz p$-acyclic; see \mbox{\cite[Lemma 6.13]{MR1397728}.}
For the second one, by \cite[Theorem 4.4(d)]{MR1992826}, the evaluation $\map(\bz p,\Linp)_c\to\Linp$ is a weak equivalence. Then, by  \cref{r:null_loop}, $\Linp$ is $\Sigma \bz p$-null. Finally, we conclude by the first statement.
\end{proof}
 
The rest of this section is devoted to prove next theorem.

\begin{theorem}
\label{p:pi1finite}
Let $(S,\F)$ be a saturated fusion system and let $P$ be a finite $p$-group. Then $CW_{BP}(\Linp)$ is a nilpotent space whose fundamental group is a finite $p$-group, and there is a homotopy fiber sequence
\[
CW_{BP}(\Linp) \rightarrow \p{CW_{BP}(\Linp)}{p} \rightarrow (\p{CW_{BP}(\Linp)}{p})_{\Q}.
\]
\end{theorem}

To prove \cref{p:pi1finite}, we will use the description given by Chachólski \cite{MR1408539}. But first, we reduce the problem to a saturated fusion system such that the homotopy cofiber of the evaluation map is simply connected.

\begin{prop}
\label{prop:reduction to C 1-connected}
Let $(S,\F)$ be a saturated fusion system and $P$ a finite $p$-group. There exists a saturated fusion system $(S', \F')$ and a $BP$-equivalence $f \colon B\F' \rightarrow \Linp$ such that the homotopy cofiber of $ev' \colon \bigvee_{[BP,B\F']_{\ast}} BP \rightarrow B\F'$ is simply connected.  
\end{prop}

\begin{proof}
The strategy of the proof is to consider an appropriate finite covering of $B\F$. Let $N$ be the normal subgroup of $\pi_1 (\Linp)\cong S/\OFp$ generated by the image of group homomorphisms $P \rightarrow \pi_1(\Linp)$ such that the corresponding pointed map $BP \rightarrow B\pi_1(\Linp)$ lifts to $\Linp$.  That is, $N$ is the normal closure of the image of the group homomorphism induced on fundamental groups by $ev \colon \bigvee_{[BP, \Linp]_{\ast}} BP \rightarrow \Linp$. Applying Seifert-Van Kampen's theorem, we obtain a description of the fundamental group of the cofiber: $\pi_1(C)\cong S/\bar{N}$ where $\OFp\lhd \bar{N}\lhd S$ is the preimage of $N$ in $S$.

By \cref{thm:quotients of L}, there is a saturated fusion system $(S', \F')$ with  $S' \leq S$, $\F' \leq \F$ and a homotopy fiber sequence $\Linp' \rightarrow \Linp \rightarrow B(\pi_1(\Linp) /N)$. In particular, we have that 
\[
\mapp(BP,B\F')\overset{f_*}{\longrightarrow}  \mapp(BP,B\F)_{\{c\}} \overset{pr_{\ast}}{\longrightarrow}  \mapp(BP,B(\pi_1(\Linp) /N))_c
\]
is also a homotopy fibration where $\{c\}$ denotes the set of components of maps which are nullhomotopic when precomposed with $pr \colon \Linp \rightarrow B(\pi_1(\Linp) /N)$. The classical equivalence $\mapp(BP,B(\pi_1(\Linp) /N))\simeq \Hom(P,\pi_1(\Linp) /N)$ shows that the base space is contractible. 

If we check that all components of $\mapp(BP,B\F)$ 
are sent by $pr_{\ast}$ to the component of the constant map, we obtain by definition that $\iota \colon \Linp' \rightarrow \Linp$ is a $BP$-equivalence. Thus, consider a pointed map $h \colon BP \rightarrow B\F$. The theory of covering maps tells us that we need to check that the image of the fundamental group is contained in $N$. But note that the composite $pr \circ h$ is homotopy
equivalent to a map induced by a group homomorphism $\alpha = \pi_1(pr \circ h) \colon P \rightarrow \pi_1(\Linp)$ whose image is in $N$ by construction.

Let $C'$ be the homotopy cofiber of the evaluation map $ev' \colon \bigvee_{[BP, \Linp']_{\ast}} BP \rightarrow \Linp'$. It remains to show that $C'$ is simply connected, that is, the normal closure of the image of ${\pi_1(ev')\colon F\rightarrow \pi_1(\Linp')\cong N}$ is all of $N\leq \pi_1(\Linp)$, where $F$ is the free product of $P$ indexed by $[BP, \Linp']_{\ast}$. Since $N$ is the normal closure of the image $\pi_1(ev)$ in $\pi_1(B\F)$, it is enough to show that for any $f\colon BP \rightarrow \Linp$, we have $\Ima(\pi_1(f))\leq \Ima(\pi_1(ev'))$.  But precisely the equivalence 
$\iota_*\colon \mapp(BP,\Linp')\simeq \mapp(BP,\Linp)$ shows that any such $f\simeq \iota_*(f')$ for $f'\in \mapp(BP,\Linp')$. 
\end{proof}

\begin{remark}\label{r:Pi1ofC}
Let $(S,\F)$ be a saturated fusion system and $P$ a finite $p$-group. Let $C$ be the the homotopy cofiber of ${ev \colon \bigvee_{[BP,B\F]_{\ast}} BP \rightarrow \Linp}$ and $N$ be the normal subgroup of $S$ generated by the images of all morphisms $\Hom(P,S)$. Then, the first paragraph of the proof of \cref{prop:reduction to C 1-connected} shows that  $\pi_1(C)\cong S/(N\OFp)$.
\end{remark}

\begin{proof}[Proof of \cref{p:pi1finite}]
Since $\Linp$ is nilpotent by \cref{prop:hmtpy gps}, we have that $CW_{BP}(\Linp)$ is also nilpotent by \cite[Corollary 3.2]{CASTFLOR}. Moreover, it follows from \cite[Lemma 2.8]{CASTFLOR} that the Bousfield-Kan completion
$R_{\infty} CW_{BP}(\Linp) \simeq \ast$ for $R = \Q$ and $R = \mathbb{F}_q$, $q \neq p$, since $\tilde{H}_{\ast}(BP; R) =0$. Finally, the Sullivan's arithmetic square applied 
to the nilpotent space $CW_{BP}(\Linp)$ gives the homotopy fiber sequence
\[
CW_{BP}(\Linp) \rightarrow \p{CW_{BP}(\Linp)}{p} \rightarrow (\p{CW_{BP}(\Linp)}{p})_{\Q}.
\]

It remains to show that  $\pi_1 CW_{BP}(\Linp)$ is a finite $p$-group. 
By \cref{prop:reduction to C 1-connected} we can assume that the homotopy cofiber of $ev \colon \bigvee_{[BP,\Linp]_{\ast}} BP \rightarrow \Linp$ is simply connected. The Chachólski's homotopy fiber sequence $
CW_{BP}(\Linp) \rightarrow \Linp \rightarrow P_{\Sigma BP} C
$
induces a long  exact sequence of homotopy groups
\[
\ldots \rightarrow \pi_2 (P_{\Sigma BP} C) \rightarrow \pi_1 CW_{BP}(\Linp) \rightarrow \pi_1 (\Linp) \rightarrow 0
\]
where $\pi_1 (\Linp)$ is a finite $p$-group by \cref{prop:hmtpy gps} and $\pi_1(P_{\Sigma BP} C)=0$ (see \cite[Proposition 2.9]{MR1257059}). Therefore, we are reduced to prove that $\pi_2 (P_{\Sigma BP} C)$ is a finite $p$-group which, by Hurewicz's theorem, is isomorphic to $H_2 (P_{\Sigma BP} C; \z{})$
because $P_{\Sigma BP} C$ is simply connected. Moreover, since $\Sigma BP$ is also simply connected,
we obtain an epimorphism $H_2 (C ; \z{}) \twoheadrightarrow H_2(P_{\Sigma BP} C; \z{})$ by \cite[Proposition 3.2]{CellInfEsp}. Summarizing, it is enough to prove that $H_2 (C ; \z{})$ is a finite $p$-group.

The cofibration sequence  
$
\bigvee_{[BP, \Linp]_{\ast}} BP \rightarrow \Linp \rightarrow C
$ 
induces a long exact sequence of homology groups 
\[
\ldots \longrightarrow H_2(\Linp; \z{}) \overset{f_1}\longrightarrow  H_2(C; \z{}) \overset{f_2}\longrightarrow 
H_1 \left({\bigvee}_{[BP, \Linp]_{\ast}} BP; \z{}\right) \longrightarrow\ldots 
\]
where $H_2(\Linp; \z{})$ is a finite $p$-group by \cref{prop:hmtpy gps}. Moreover, 
\[
H_1 \left({\bigvee}_{[BP, \Linp]_{\ast}} BP; \z{}\right) \cong \pi_1\left({\bigvee}_{[BP, \Linp]_{\ast}} BP\right)_{ab} \cong {\bigoplus}_{[BP, \Linp]_{\ast}} P_{ab},
\]
where $[BP,\Linp]_{\ast}$ is a finite set because there is an epimorphism of sets $[BP,BS]_{\ast} \twoheadrightarrow [BP,\Linp]_{\ast}$ by \cite[Theorem 4.4]{MR1992826}, and 
$[BP, BS]_{\ast} \cong \Hom(P, S)$ is finite.  Then, we conclude that $H_2 (C;\z{})$ is a finite $p$-group. 
\end{proof}

\section{The kernel of a map from a classifying space}\label{s:ker f}

In this section, we study the notion of kernel of a map from a classifying space introduced by Notbohm (cf. \cite{MR1286829}), adapted to the context of classifying spaces of fusion systems. The main goal is to show that this subgroup can be used to determine when a map is null-homotopic. Moreover, we characterize the subgroups of the Sylow $p$-subgroup $S$ which are kernels of maps. 

\begin{defi}(\cite{MR1286829})
\label{d:kerf}
Let $(S,\F)$ be a saturated fusion system and $Z$ be a connected $p$-complete $\sbz{p}$-null space. The kernel of a pointed map  $f \colon \Linp \rightarrow Z$ is the subgroup of $S$ defined by:
\[
\ker(f) := \{g \in S \mid f|_{B\langle g \rangle} \simeq \ast \}.
\]
\end{defi}
\begin{recollection}
This notion was introduced by Notbohm in the context of compact Lie groups and discrete subgroups. In \cite[Proposition 2.4]{MR1286829} he shows, with the given definition, that $\ker(f)$ is a subgroup of $S$. In fact, since $S$ is finite, one does not need $Z$ to be $p$-complete, only being $\Sigma B\z/p$-null is necessary. The kernel of a map is homotopy invariant: $\ker(f)=\ker(f')$ if $f\simeq f'$.
\end{recollection}

\begin{theorem}
\label{t:Dwyer p local}
Let $(S,\F)$ be a saturated fusion system, and $Z$ be a connected $p$-complete\linebreak \mbox{$\sbz{p}$-null} space. A map $f \colon \Linp \rightarrow Z$ is 
null-homotopic if and only if $\ker(f) = S$.
\end{theorem}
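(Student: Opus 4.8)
The plan is to establish the nontrivial implication: assuming $\ker(f)=S$, deduce that $f\simeq\ast$. The converse is immediate, since if $f\simeq\ast$ then $f|_{B\langle g\rangle}\simeq\ast$ for every $g\in S$, whence $\ker(f)=S$. I would split the argument into a \emph{pointwise} step and a \emph{gluing} step, paralleling Dwyer's proof of \cite[Theorem 5.1]{MR1397728} but replacing the subgroup decomposition of $BG$ by the homotopy colimit decomposition of $|\Lin|$ from Proposition~\ref{p:L = hocolim}.

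Pointwise step: first I would show that $f|_{BP}\simeq\ast$ for every $\F$-centric subgroup $P\leq S$. Indeed, for such a $P$ and any $x\in P\leq S$ we have $f|_{B\langle x\rangle}\simeq\ast$ because $x\in\ker(f)=S$; hence the kernel (in the sense of Definition~\ref{d:kerf}) of the composite $BP\to\Linp\xrightarrow{f}Z$ is all of $P$. Applying the already known finite-group case of the statement, namely Dwyer's \cite[Theorem 5.1]{MR1397728}, to the $p$-group $P$ — whose Sylow subgroup is $P$ itself, and whose classifying space $\p{BP}{p}$ receives $f|_{BP}$ since $Z$ is $p$-complete — yields $f|_{BP}\simeq\ast$.

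Gluing step: next I would pass to pointed mapping spaces. By Proposition~\ref{p:L = hocolim}, $|\Lin|\simeq\hocolim_{\OFc}\tilde B$ with $\tilde B(P)\simeq BP$; since $Z$ is $p$-complete, $\mapp(\Linp,Z)\simeq\mapp(|\Lin|,Z)\simeq\operatorname{holim}_{\OFc}\mapp(\tilde B(-),Z)$, where $P\mapsto\mapp(BP,Z)$ is a functor on $\OFc$ because inner automorphisms act trivially up to homotopy. The key input is that $Z$ is $\sbz{p}$-null, so $\Omega Z$ is $\bz{p}$-null and hence $BP$-null (Remark~\ref{r:BZp null hence BP}); therefore $\mapp(BP,\Omega Z)\simeq\ast$, which says exactly that the component of the constant map in $\mapp(BP,Z)$ is contractible, so $\pi_j(\mapp(BP,Z),\ast)=0$ for all $j\geq1$. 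Feeding this into the Bousfield--Kan exact sequence for $\pi_0$ of a homotopy limit,
\[
\ast\to \varprojlim^{1}_{\OFc}\,\pi_1\big(\mapp(B(-),Z),\ast\big)\to \pi_0\,\mapp(\Linp,Z)\to \varprojlim_{\OFc}\,\pi_0\,\mapp(B(-),Z),
\]
the $\varprojlim^{1}$ term vanishes because the functor $P\mapsto\pi_1(\mapp(BP,Z),\ast)$ is trivial, so the right-hand map is injective. The class $[f]$ maps to the family $\big([f|_{BP}]\big)_{P}$, which is the basepoint by the pointwise step; by injectivity $[f]$ is the basepoint, i.e. $f\simeq\ast$.

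The hard part is making the gluing step rigorous. One must verify that $P\mapsto\mapp(BP,Z)$ descends to a functor on $\OFc$ with the constant maps forming a natural basepoint, that the identification $\mapp(\Linp,Z)\simeq\operatorname{holim}_{\OFc}\mapp(B(-),Z)$ holds with the correct (pointed) basepoint bookkeeping — this is where $p$-completeness of $Z$ and Proposition~\ref{p:L = hocolim} enter — and that the $\varprojlim^{1}$ term is formed with respect to exactly these constant basepoints. This last point is precisely why the pointwise step, which places each $f|_{BP}$ in the constant component, must be carried out \emph{before} the limit argument can be used to detect the nullity of $f$.
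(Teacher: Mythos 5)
Your converse and your pointwise step are fine: applying Dwyer's theorem to each finite $p$-group $P$ (its own Sylow subgroup) to get $f|_{BP}\simeq\ast$ from $\ker(f)=S$ is essentially how the paper begins (the paper gets $f|_{BS}\simeq\ast$ and then uses that every map $BP\to\Linp$ factors through $\Theta$). The genuine gap is in the gluing step, and it sits exactly where the real work of the paper's proof lies. The identification $\mapp(\Linp,Z)\simeq\operatorname{holim}_{\OFc}\mapp(\tilde B(-),Z)$ is not available: Proposition~\ref{p:L = hocolim} is an \emph{unpointed} homotopy colimit decomposition, and the diagram $\tilde B$ cannot be promoted to a diagram of pointed spaces over $\OFc$. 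Your parenthetical justification (``inner automorphisms act trivially up to homotopy'') is precisely where this fails: $Bc_g$ is freely homotopic to the identity, but its \emph{pointed} homotopy class is $c_g\in\Hom(P,P)\cong[BP,BP]_\ast$, so precomposition by inner automorphisms does \emph{not} act trivially on $\mapp(BP,Z)$; a pointed rigidification would amount to a functorial section of $\Hom_{\F}(P,Q)\to\Rep_{\F}(P,Q)$, which does not exist in general. What the decomposition actually gives is $\map(|\Lin|,Z)\simeq\operatorname{holim}_{\OFc^{\operatorname{op}}}\map(\tilde B(-),Z)$ with \emph{free} mapping spaces, and there the relevant components are not contractible: since $\mapp(BP,Z)_c\simeq\ast$, the fibration $\mapp(BP,Z)_c\to\map(BP,Z)_c\to Z$ gives $\map(BP,Z)_c\simeq Z$, so the coefficient functors are the constant functors $\pi_i(Z)$, not $0$. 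The vanishing that makes your argument short is an artifact of the incorrect pointed formula.

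Two further problems then surface. First, the exact sequence you invoke, with only a $\lim^1\pi_1$ correction, is the Milnor sequence for towers; over an arbitrary small category such as $\OFc$ the discrepancy between $\pi_0\operatorname{holim}$ and $\lim\pi_0$ is controlled by \emph{all} of the groups $\lim^i\pi_i$, $i\geq 1$ (the fringed Bousfield--Kan spectral sequence, i.e.\ the obstruction theory of \cite{MR928836} that the paper uses). Second, once the functors are corrected to the constant functors $\pi_i(Z)$, the obstruction groups $\lim^i_{\OFc}\pi_i(Z)$ are not obviously zero: the paper kills them by filtering the constant functor by atomic functors and invoking \cite[Proposition 6.1]{MR1154593}, and this argument requires the $\pi_i(Z)$ to be $\z{}_{(p)}$-modules --- in particular $\pi_1(Z)$ abelian. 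Since $Z$ is only assumed connected, $p$-complete and $\sbz{p}$-null, the paper needs two additional reductions for which your proposal has no counterpart: a Zabrodsky-lemma argument showing the composite $\Linp\to Z\to B\pi_1(Z)$ is null-homotopic, and a lift of $f$ to the universal cover of $Z$, to which the abelian-$\pi_1$ case is then applied. So while your skeleton (pointwise step, then gluing over $\OFc$) matches the paper's, the gluing step as written would fail, and repairing it forces you through precisely the higher-limit computation and universal-cover reduction that constitute the paper's proof.
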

\begin{proof}
One implication follows from the definition: if $f \simeq \ast$, then $f|_{BS} \simeq \ast$, and therefore $\ker(f) = S$. Now, assume $\ker(f) = S$, that is,   $f\vert_{BS}\simeq \ast$. We will use the decomposition of the classifying space with respect to the orbit category, see \cref{p:L = hocolim}, to study the mapping space. We have the following equivalences
\[
\map(\Linp,Z)_{\{c\}}\simeq \map(\hocolim_{\OFc} \tilde BP,Z)_{\{c\}} \simeq \holim{\OFc}\map(\tilde BP,Z)_{c}\simeq \holim{\OFc} \underline Z
\]
where $\map(\Linp,Z)_{\{c\}}=\{f\colon \Linp\to Z\mid f\vert_{BS}\simeq \ast \}\subset \map(\Linp,Z)$, and $\underline Z$ denotes the constant functor with value $Z$. The first equivalence holds because $Z$ is $p$-complete and the last equivalence follows from the fact that $Z$ is $\Sigma BP$-null.
We also have the following equivalences:
\[
\holim{\OFc} \underline Z\simeq \map (\hocolim_{\OFc} \underline{*}, Z) \simeq \map (|\OFc|,Z).
\]
Since $Z$ is $p$-complete, \cref{p:|O|=punt} finally gives
\[
\map(\Linp,Z)_{\{c\}}\simeq \map(|\OFc|,Z)\simeq \map(|\OFc|^\wedge_p,Z)\simeq Z.
\]
In particular, $\map(\Linp,Z)_{\{c\}}$ is connected and equal to $\map(\Linp,Z)_{c}$, that is, every map $f\in  \map(\Linp,Z)_{\{c\}}$ is null-homotopic.
\end{proof}
\begin{remark}\label{r:propertykernel}
Let $(S,\F)$ be a saturated fusion system, $Z$ be a connected $p$-complete $\sbz{p}$-null space, and $f \colon \Linp \rightarrow Z$. Let $\alpha \colon P\rightarrow S$ be a group homomorphism, then $f\circ \Theta \circ B\alpha$ is null-homotopic if and only if $\alpha(P)\leq \ker(f)$. In this situation, $\Theta \circ B\alpha$ factors though the homotopy fiber of $f$.
\end{remark}

In the spirit that the kernel of a group homomorphism is a normal subgroup, the kernel of a map satisfies a weaker closure property.

\begin{defi}
Let $(\F,S)$ be a saturated fusion system and $Q\le S$. $Q$ is said to be 
\emph{strongly $\F$-closed} if for all $P \leq Q$ and all morphism  $\varphi\in \Hom_\F(P,S)$ we have $\varphi(P) \leq Q$.
\end{defi}
\begin{remark}
Notice that if $Q\le S$ is strongly $\F$-closed in a saturated fusion system $(\F,S)$, then it follows from the definition that $Q$ is normal subgroup of $S$ since conjugation by elements of $S$ are morphisms of $\F$.
\end{remark}

\begin{example}
If $G$ is a finite group and $S \in \Syl_p(G)$,  $Q \leq S$ is strongly $\F_S(G)$-closed if and only if $Q$ is \textit{strongly closed in $G$}, i.e., if for all $q \in Q$ and 
$g \in G$ such that $c_g(q) \in S$, then $c_g(q) \in Q$. Note that if $G=S$ is a finite $p$ group, then $Q\lhd S$.
\end{example}

\begin{theorem}
\label{t:characterization_kernels}
Let $(S,\F)$ be a saturated fusion system and $K\leq S$. Then $K$ is the kernel of a map $f \colon \Linp \rightarrow Z$ where $Z$ is a $p$-complete and $\Sigma B\mathbb{Z}/p$-null space if and only if $K$ is a strongly $\F$-closed subgroup. Moreover, if $K$ is strongly $\F$-closed, then there exists a fusion system $\F'$ and a map $f'\colon \Linp\to \Linp'$ such that $K=\ker(f')$.
\end{theorem}
\begin{proof}
First we assume that $K=\ker(f)$ for $f\colon \Linp \rightarrow Z$ where $Z$ is a $p$-complete and $\Sigma B\mathbb{Z}/p$-null space. Let $P \leq \ker(f)$ and $\varphi \in \Hom_{\F}(P, S)$. The following homotopy commutative diagram
\[
\begin{tikzcd}[row sep= small]
BP \arrow[dd, "B\varphi"', "\simeq"] \arrow[rd, "B\delta_P"] &               &   \\
& B\F \arrow[r,"f"] & Z \\
B\varphi(P) \arrow[ru, "B\delta_{\varphi(P)}"']    &               &  
\end{tikzcd}
\]
shows that $f\circ B\delta_{P}$ is null-homotopic iff $f\circ B\delta_{\varphi(P)}$ is null-homotopic. Therefore $\varphi(P)\leq \ker(f)$.

Conversely, let $K$ be a strongly $\mathcal{F}$-closed subgroup of $S$. Consider the group homomorphism $\rho\colon S\to S/K\to \Sigma_{|S/K|}$ which is the composite of the projection followed by the regular representation of $S/K$. Recall that $\rho$ is a fusion preserving morphism if, for any isomorphism $\varphi\colon P\to \varphi(P)$ in $\F$, the actions of $P$ via ${\iota\colon P\hookrightarrow S\to S/K}$ and ${\phi \colon P\overset\varphi \to \varphi(P)\hookrightarrow S\to S/K}$ define isomorphic $P$-sets $(S/K,\iota)$ and $(S/K,\phi)$
. According to \cite[Theorem B]{MR2518640}, if $\rho$ is a fusion invariant morphism, then there is a non-negative integer $m \geq 0$ and a map 
\[
f\colon \Linp\to \p{B(\Sigma_{|S/K|}\wr \Sigma_{p^m})}{p},
\]
where $\wr$ denotes the wreath product, such that $f\circ B\delta_{P}$ is homotopic to the composite 
\[
BS \overset{\p{(\Delta B \rho)}{p}}\longrightarrow  \p{(B(\Sigma_{|S/K|})^{p^m})}{p} \overset{\p{\Delta}{p}}\longrightarrow  \p{B(\Sigma_{|S/K|} \wr \Sigma_{p^m})}{p}.
\] 
In particular $\ker(f)=K$. The proof is then reduced to show that $\rho$ is fusion invariant. 

First, we can describe $(S/K,\phi)$ as

\[
(S/K, \phi) \cong \Iso^{\ast}(\varphi) \Res_{\varphi(P)}^S (S/K) \cong \Iso^{\ast}(\varphi) \Res_{\varphi(P)}^S\Ind_K^S(\ast).
\]
Applying the Mackey formula to $\Res_{\varphi(P)}^S\Ind_K^S$, we get
\begin{equation}\begin{split}\label{equation:S/K P set}
(S/K, \phi) &\cong \coprod_{[x] \in \varphi(P) \backslash S/K} \Iso^{\ast}(\varphi) \Ind_{\varphi(P) \cap K^x}^{\varphi(P)} 
\Iso^{\ast}(c_x) \Res_{(\varphi(P) \cap K)^x}^K (\ast) \\ 
&=     \coprod_{[x] \in \varphi(P) \backslash S/K} \Ind_{\varphi^{-1}(\varphi(P) \cap K)}^P \Iso^{\ast}(\varphi) 
\Iso^{\ast}(c_x) \Res_{(\varphi(P) \cap K)^x}^K (\ast).
\end{split}\end{equation}
where the second equality comes from the commutativity of isogation and induction and the strongly $\F$-closed condition for $K$. Again, since $K$ is strongly $\F$-closed and $\varphi$ an isomorphism in $\F$ we have $\varphi^{-1}(\varphi(P) \cap K) = P \cap K$.
Therefore, since $\Iso^{\ast}(\varphi) \Iso^{\ast}(c_x) \Res_{(\varphi(P) \cap K)^x}^K (\ast) = \ast$ as $(P \cap K)$-set,  \cref{equation:S/K P set} becomes
\[
(S/K, \phi) \cong \coprod_{[x] \in \varphi(P) \backslash S/K} \Ind_{P \cap K}^P (\ast) \cong \coprod_{l_{\varphi}} P/P \cap K,
\]
where $l_{\varphi} = |\varphi(P) \backslash S/K| = |S/\varphi(P)\cdot K| = |S|/|\varphi(P)\cdot K|$ since $K \lhd S$. Similarly, 
\[
(S/K, \iota) \cong \coprod_{[x] \in P \backslash S/K} \Ind_{P \cap K}^P (\ast) \cong \coprod_{l} P/P \cap K,
\]
where $l = |S|/|P\cdot K|$. Since both $(S/K, \phi)$ and $(S/K, \iota)$ are described as a disjoint union of isomorphic $P$-sets, we conclude that $l=l_\varphi$ and they are isomorphic.
\end{proof}

\begin{example}
Let $f\colon \Linp \rightarrow BP$ be a map between the classifying space of a saturated fusion system $\F$ and the classifying space of a finite $p$ group $P$. The map $f$ satisfies the hypothesis required in \cref{d:kerf} to consider the kernel of $f$ since $\Omega BP$ is homotopically discrete and $BP$ is $p$-complete. Then $\ker(f)=\ker(g)$ where $g\colon S\rightarrow P$ is a group homomorphism such that $Bg\simeq f\circ \Theta$ (which is well-defined by \cite[Theorem 4.4]{MR1992826}). In particular, this situation applies to the map $f\colon \Linp \rightarrow B(\pi_1(B\F))$. From the description of the fundamental group in \cref{prop:hmtpy gps} we obtain $\ker(f)=\OFp$. In particular, we recover the fact that it is a strongly $\F$-closed subgroup of $S$.
\end{example}

We finish this section approaching the following question. Given a saturated fusion system $(S,\F)$ and a map $f \colon \Linp \rightarrow Z$ where $Z$ is a connected $\Sigma B\mathbb Z/p$-null $p$-complete space. Does $f$ 
factors, up to homotopy, through $\tilde{f} \colon B\F' \rightarrow Z$ with trivial kernel, where $\F'$ is a saturated fusion system? Under some hypothesis we can give a positive answer to the previous question, which is related to the construction of quotients in the context of fusion systems and their classifying spaces.

\begin{defi}
Let $(\F,S)$ be a saturated fusion system and $Q\le S$. $Q$ is said to be 
\emph{$\F$-normal} or \emph{normal in $\F$}, denoted $Q\unlhd \F$, if for all $P,R\le S$ and all $\varphi\in \Hom_\F(P,R)$, $\varphi$ extends to a morphism $\overline{\varphi}\in\Hom_\F(PQ,RQ)$ such that $\overline\varphi(Q)=Q$.
\end{defi}
It is straightforward that the following sequences of implications hold for $Q\le S$ (but they are not equivalences, see \cite[Proposition 4.5]{MR2848834} for the first implication)
\begin{center}
$Q$ is normal in $\F$ $\Rightarrow$ $Q$ is strongly $\F$-closed $\Rightarrow$ $Q$ is normal in $S$.
\end{center}
Given an $\F$-normal subgroup $K\leq S$, Oliver and Ventura \cite{MR2317758} studied quotients and extensions of fusions systems and their classifying spaces. They show that there is a fusion system $(\F/K, S/K)$ with a map $f\colon B\F \rightarrow B(\F/K)$ which factors through $BS\rightarrow B(S/K)$ when restricted to Sylow subgroups. 

\begin{prop}
\label{p:K normal}
Let $(S,\F)$ be a saturated fusion system, $Z$ be a $\Sigma B\mathbb Z/p$-null $p$-complete space and $K\leq S$ an $\F$-normal subgroup. There is a map $p\colon \Linp \rightarrow B(\F/K)$ with kernel $K$ inducing an equivalence $$p_*\colon \map(B(\F/K),Z)\stackrel{\simeq}{\rightarrow} \map(B\F,Z)_{\{c\}}$$ where $\{c\}$ denotes the subset of components of those maps nullhomotopic when restricted to $BK$ via $BS\rightarrow B\F$. Moreover, if $p_*(\tilde{f})\simeq f$ then $\ker(\tilde{f})=\ker(f)/K$. 
\end{prop}

\begin{proof}
The starting point is \cite[Section 2]{MR2317758} where the authors describe extensions of fusion systems. In particular, they show that there exists a saturated fusion system $(S/K, \F/K)$ with linking system $\Lin/K$, and a map $pr \colon |\Lin| \rightarrow |\Lin/K|$ which fits in a homotopy fiber sequence $BK \rightarrow |\Lin| \rightarrow |\Lin/K|$ which restricts via $\Theta$ to $BK \rightarrow  BS \rightarrow B(S/K)$ induced by the $\pi \colon S\twoheadrightarrow S/K$.

Since $BK$ is $B\z/p$-acyclic, and $Z$ is $\Sigma B\z/p$-null, as an application of Zabrodsky lemma (see 
\cite[Proposition 3.5]{MR1397728}), we obtain an equivalence 
$$p_*\colon \map(|\Lin/K|,Z)\stackrel{\simeq}{\rightarrow} \map(|\Lin|,Z)_{\{c\}}$$ where $\{c\}$ denotes the subset of components of those maps nullhomotopic when restricted to $BK$ via $BS\rightarrow |\Lin|$. Since $Z$ is $p$-complete we obtain the equivalence
$$p_*\colon \map(B(\F/K),Z)\stackrel{\simeq}{\rightarrow} \map(B\F,Z)_{\{c\}}.$$
It remains to show that if $p_*(\tilde{f})\simeq f$ then $\ker(\tilde{f})=\ker(f)/K$. Note that the equivalence applied to the classifying space of $S$ and $\pi \colon S\twoheadrightarrow S/K$, shows that for any $H\leq S/K$ we have an equivalence $$\map(BH,Z)\stackrel{\simeq}{\rightarrow} \map(B\pi^{-1}(H),Z)_{\{c\}}.$$
Then $\pi^{-1}(H)\leq \ker(f)$ if and only if $H \leq \ker(\tilde{f})$ (note that taking $Z=\Linp/K$ together $f\colon \Linp \rightarrow Z$ gives the commutativity of the required diagrams).
\end{proof}
\begin{coro}
Let $(S,\F)$ be a saturated fusion system, $Z$ be a $\Sigma B\mathbb Z/p$-null $p$-complete space, and  $f\colon \Linp \rightarrow Z$ be a map. If ${\ker(f) \unlhd \F}$ then there exists a map ${\tilde{f}\colon B(\F/\ker(f))\rightarrow Z}$ with trivial kernel which factors $f$.
\end{coro}

There are generic situations in which we can apply \cref{p:K normal}. 

\begin{defi}
We say that a $p$-group $S$ is resistant if it is normal in each saturated fusion system over $S$.
\end{defi}

\begin{coro}
\label{c:normal resistant}
Let $(S,\F)$ be a saturated fusion system, $Z$ be a $\Sigma B\mathbb Z/p$-null $p$-complete space and $f \colon \Linp \rightarrow Z$ be a pointed map with $K\leq \ker(f)$.
If $K\leq S$ is an abelian strongly $\F$-closed subgroup or $S$ is resistant, then there exist a saturated fusion system $(S/K,\F/K)$ and a homotopy factorization $\tilde{f} \colon \Linp/K \rightarrow Z$ with $\ker(\tilde{f})=\ker(f)/K$.
\end{coro}

\begin{proof}
Recall that $\ker(f)$ is strongly $\F$-closed by \cref{t:characterization_kernels}. If $K$ is abelian and strongly $\F$-closed, then it is normal in $\F$ by 
\cite[Corollary 4.7]{MR2848834}. In the other case, if $S$ is resistant, each strongly $\F$-closed subgroup is also normal in $\F$ (see \cite[Proposition 4.5]{MR2848834}). Finally, apply \cref{p:K normal}.
\end{proof}

\section{Cellular properties of the classifying space of a saturated fusion system}\label{s:cw BF}

The goal of this section is to prove the main theorem of the paper. Given a finite $p$-group $P$, this result characterizes the property of being $BP$-cellular for classifying spaces of saturated fusion systems in terms of the fusion data.

\begin{defi}
Let $(\F,S)$ be a saturated fusion system. We define $Cl_{\F}(P)$ to be the smallest strongly $\F$-closed subgroup of $S$ that contains $\varphi(P)$ for all $\varphi \in \Hom(P,S)$.
\end{defi}
Notice that given a finite $p$-group $P$, $Cl_{\F}(P)$ is well-defined since the intersection of strongly $\F$-closed subgroups is again strongly $\F$-closed.

\begin{remark}\label{r:N<CL<NO}
Let $N$ be the normal subgroup of $S$ generated by $\varphi(P)$, for all $\varphi\in \Hom(P,S)$. Since $\OFp$ is strongly $\F$-closed, $N\OFp$ is so by \cite[Proposition A.9]{MR2835340}, and we have inclusions $N\leq Cl_{\F}(P)\leq N\OFp \leq S$.
\end{remark}

\begin{theorem}
\label{t:BP << Lp}
Let $(S,\F)$ be a saturated fusion system and let $P$ be a finite $p$-group. Then $\Linp$ is $BP$-cellular if and only if $S = Cl_{\F}(P)$.
\end{theorem}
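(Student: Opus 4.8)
The plan is to reduce the statement to the homotopy-nullity of a single map, using the Chach\'olski fibration together with the machinery developed earlier in the paper. Recall from the introduction that if $C$ denotes the homotopy cofibre of the evaluation $\ev \colon \bigvee_{[BP,\Linp]_{\ast}} BP \rightarrow \Linp$, then $CW_{BP}(\Linp)$ is the homotopy fibre of the composite $r \colon \Linp \rightarrow C \rightarrow P_{\Sigma BP} C$. The first step is to establish the general principle that $\Linp$ is $BP$-cellular if and only if the cellular approximation $c \colon CW_{BP}(\Linp) \rightarrow \Linp$ is a homotopy equivalence, and that this in turn is equivalent to $r$ being null-homotopic (so that its fibre recovers $\Linp$). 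Here I would use Proposition~\ref{p:CW Lin nilp} and Proposition~\ref{p:pi1finite}: since both $\Linp$ and $CW_{BP}(\Linp)$ are nilpotent $p$-good spaces with finite $p$-group fundamental groups, and since $r$ becomes visible after $p$-completion, the condition reduces to showing that $\p{r}{p}$ is null-homotopic.

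The second step is to identify the target of $\p{r}{p}$ as a space to which the kernel machinery of Section~\ref{s:ker f} applies. The key observation is that $P_{\Sigma BP} C$ is $\Sigma BP$-null, hence $\sbz{p}$-null (because $\bz{p}$-null spaces are $BP$-null by Remark~\ref{r:BZp null hence BP}, and $\bz{p}$-acyclicity of $BP$ gives the $\Sigma BP$ version), and after $p$-completion $\p{(P_{\Sigma BP} C)}{p}$ is a connected $p$-complete $\sbz{p}$-null space of exactly the type considered in Definition~\ref{d:kerf}. Thus $\ker(\p{r}{p})$ is defined and, by Proposition~\ref{p:kerf strongly}, is a strongly $\F$-closed subgroup of $S$. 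Now Theorem~\ref{t:Dwyer p local} applies verbatim: the map $\p{r}{p} \colon \Linp \rightarrow \p{(P_{\Sigma BP} C)}{p}$ is null-homotopic if and only if $\ker(\p{r}{p}) = S$.

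The third step is the computation of this kernel, which is precisely the content of Proposition~\ref{p:Kerf = Cl}, namely $\ker(\p{r}{p}) = Cl_{\F}(P)$. Granting this proposition, the chain of equivalences closes up cleanly: $\Linp$ is $BP$-cellular $\iff$ $\p{r}{p} \simeq \ast$ $\iff$ $\ker(\p{r}{p}) = S$ $\iff$ $Cl_{\F}(P) = S$, which is exactly the assertion of the theorem. The one direction that is almost formal is the ``only if'': if $\Linp$ is $BP$-cellular, then $\Linp \in \mathcal{C}(BP)$ and every element of $S$ lies in the image of some $P \rightarrow S$ up to $\F$-conjugacy, forcing $Cl_{\F}(P) = S$; this can also be extracted from the same kernel computation.

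The main obstacle I expect is the kernel computation $\ker(\p{r}{p}) = Cl_{\F}(P)$, i.e.\ Proposition~\ref{p:Kerf = Cl}. The inclusion $Cl_{\F}(P) \leq \ker(\p{r}{p})$ should follow because every element $g$ in the image of a homomorphism $P \rightarrow S$ gives a map $B\langle g\rangle \rightarrow BP \xrightarrow{\ev} \Linp$ that factors through the wedge, hence becomes null after passing to the cofibre $C$ and a fortiori after nullification; strong $\F$-closure then promotes this to all of $Cl_{\F}(P)$ via Proposition~\ref{p:kerf strongly}. The reverse inclusion $\ker(\p{r}{p}) \leq Cl_{\F}(P)$ is the delicate part: one must show that if $g \notin Cl_{\F}(P)$ then $\p{r}{p}|_{B\langle g\rangle}$ is nontrivial, which requires comparing against a detecting map into a space of the form $\p{B(\Sigma_{|S/Cl_\F(P)|}\wr\Sigma_{p^m})}{p}$ as produced by Proposition~\ref{p:exist m and f} with $K = Cl_{\F}(P)$, and verifying that $\p{r}{p}$ does not kill elements outside $Cl_{\F}(P)$. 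I would handle this by factoring the detecting map through the nullification and using the universal property of $P_{\Sigma BP}$ together with the fact that the detecting map has kernel exactly $Cl_{\F}(P)$.
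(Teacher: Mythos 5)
Your outline follows the same route as the paper: reduce cellularity of $\Linp$ to the null-homotopy of $\p{r}{p}$, apply the kernel theory of Section~\ref{s:ker f} (Theorem~\ref{t:Dwyer p local}) to the $p$-completed Chach\'olski map, and close the loop with Proposition~\ref{p:Kerf = Cl}. The ``only if'' direction is fine and matches the paper. However, there is a genuine gap at the step you dismiss as formal, namely the claim that $\p{r}{p}\simeq\ast$ implies that $\Linp$ is $BP$-cellular, ``so that its fibre recovers $\Linp$''. This is not what a null-homotopy gives you: the homotopy fibre of a null-homotopic map is the product of the total space with the loops on the base, so from $\p{r}{p}\simeq\ast$ you only obtain a splitting $\p{CW_{BP}(\Linp)}{p}\simeq \Linp\times\Omega\p{(P_{\Sigma BP}C)}{p}$. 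Cellularity is equivalent to the contractibility of $P_{\Sigma BP}C$, not merely to the nullity of $r$ or $\p{r}{p}$, and killing the loop factor is precisely the substantive part of the paper's proof. The paper does it by applying $P_{BP}(-)$ followed by $p$-completion to both factors of the splitting: the cellular factor dies because $CW_{BP}(\Linp)$ is $BP$-acyclic, its nullification is $p$-good (Proposition~\ref{p:pi1finite}) and its $p$-completion is $BP$-null (Proposition~\ref{p:pcom-null}), while the loop factor is invariant under both functors; and even after concluding that $c$ is a mod~$p$ equivalence, one still needs the arithmetic square of Proposition~\ref{p:CW Lin nilp} (with $\Linp_{\mathbb{Q}}\simeq\ast$) to upgrade it to an honest weak equivalence. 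None of this appears in your proposal, and the assertion you rely on --- mentioned in the introduction of the paper --- is exactly what the proof of the theorem is required to establish, so it cannot be invoked.

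Two further points. First, you never use Lemma~\ref{l:s=Cl}, the $1$-connectivity of the cofibre $C$ under the hypothesis $S=Cl_{\F}(P)$; this is what guarantees that the $p$-completion of the Chach\'olski fibration is still a fibre sequence and it feeds the hypotheses of Lemma~\ref{l:Sigma null} and Proposition~\ref{p:pcom-null}. Second, your justification that $\p{(P_{\Sigma BP}C)}{p}$ is $\sbz{p}$-null runs Remark~\ref{r:BZp null hence BP} in the wrong direction: that remark shows $\bz{p}$-null implies $BP$-null, whereas you need to pass from the (weaker) $\Sigma BP$-nullity of $P_{\Sigma BP}C$ to the (stronger) $\sbz{p}$-nullity, and in addition $p$-completion does not preserve nullity for free. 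This is exactly the content of Lemma~\ref{l:Sigma null}, whose proof uses loop--completion commutation, Miller's theorem and the $1$-connectivity of $C$. So both the applicability of the kernel theory to $\p{r}{p}$ and the passage from $\ker(\p{r}{p})=S$ back to cellularity require arguments your outline does not supply.
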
 

We delay the proof of the theorem to describe some straightforward consequences.

\begin{coro}
\label{c:BS << Lp}
Let $(S,\F)$ be a saturated fusion system and let $P$ be a finite $p$-group.
\begin{enumerate}[(a)]
\item The classifying space $\Linp$ is $BS$-cellular.
\item Let $(S,\F')$ be a saturated fusion system with $\F\subset \F'$. If $B\F$ is $BP$-cellular then $B\F'$ is also $BP$-cellular. In particular, if $BS$ is $BP$-cellular then so is $\Linp$.
\item Let $Q\twoheadrightarrow P$ be an epimorphism of finite $p$-groups. If $\Linp$ is $BP$-cellular, then it is also $BQ$-cellular. 
\item Let $A$ be a pointed connected space. If $Cl_\F((\pi_1 A)_{ab})=S$, then $\Linp$ is $A$-cellular.
\item Let $\Omega_{p^m}(S)$ be the (normal) subgroup of $S$ generated by its elements of order $p^i$, with $i \leq m$. Then $\Linp$ is $\bz{p^m}$-cellular if and only if 
$S = Cl_{\F}(\Omega_{p^m}(S))$. In particular, there is a non-negative integer $m_0 \geq 0$ such that $\Linp$ is $\bz{p^m}$-cellular for all $m \geq m_0$.
\end{enumerate}
\end{coro}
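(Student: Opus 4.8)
The plan is to deduce all four items from Theorem~\ref{t:BP << Lp} by computing $Cl_{\F}(P)$ for the appropriate group $P$, using in addition some monotonicity of $P\mapsto Cl_{\F}(P)$ and the closure properties of the class $\mathcal{C}(A)$. For item (a) I would apply the theorem with $P=S$: the identity $S\to S$ shows $S\subseteq Cl_{\F}(S)$, while $Cl_{\F}(S)\leq S$ by definition, so $Cl_{\F}(S)=S$ and $\Linp$ is $BS$-cellular. For item (b), since $\F\subset\F'$ over the same $S$ we have $\Hom_{\F}(P,Q)\subseteq\Hom_{\F'}(P,Q)$, so every strongly $\F'$-closed subgroup is automatically strongly $\F$-closed. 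As $Cl_{\F'}(P)$ is strongly $\F'$-closed and contains the images of all homomorphisms $P\to S$, minimality of $Cl_{\F}(P)$ gives $Cl_{\F}(P)\leq Cl_{\F'}(P)$. Hence if $B\F$ is $BP$-cellular then $S=Cl_{\F}(P)\leq Cl_{\F'}(P)\leq S$, so $Cl_{\F'}(P)=S$ and $B\F'$ is $BP$-cellular.

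For item (d) I would take $P=\z{/p^m}$, so $BP=\bz{p^m}$, and check that $Cl_{\F}(\z{/p^m})=Cl_{\F}(\Omega_{p^m}(S))$. The images of the homomorphisms $\z{/p^m}\to S$ are exactly the cyclic subgroups $\langle x\rangle$ with $x$ of order dividing $p^m$, and these generate $\Omega_{p^m}(S)$; thus $Cl_{\F}(\z{/p^m})$ is the smallest strongly $\F$-closed subgroup containing $\Omega_{p^m}(S)$. On the other hand every homomorphism $\Omega_{p^m}(S)\to S$ sends generators of order $\leq p^m$ to elements of order $\leq p^m$, so its image lies in $\Omega_{p^m}(S)$; therefore $Cl_{\F}(\Omega_{p^m}(S))$ is also the smallest strongly $\F$-closed subgroup containing $\Omega_{p^m}(S)$, and the two coincide. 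The stated equivalence then follows from the theorem. For the final assertion, once $p^m\geq\exp(S)$ we have $\Omega_{p^m}(S)=S$, whence $Cl_{\F}(\Omega_{p^m}(S))=Cl_{\F}(S)=S$ by item (a); since $m\mapsto\Omega_{p^m}(S)$ is nondecreasing this equality persists for all larger $m$, and I would let $m_0$ be the least integer with $S=Cl_{\F}(\Omega_{p^{m_0}}(S))$.

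Item (c) is the substantial one, and the reduction I would use is the following: \emph{if $P$ is a finite $p$-group with $Cl_{\F}(P)=S$ and $BP$ is $A$-cellular, then $\Linp\in\mathcal{C}(BP)\subseteq\mathcal{C}(A)$}, where the first membership is Theorem~\ref{t:BP << Lp} and the inclusion of classes holds because $\mathcal{C}(A)$ is closed under weak equivalences and pointed homotopy colimits and contains $BP$. First I would produce such a $P$. Since $S$ is finite, finitely many homomorphisms $\alpha_1,\dots,\alpha_k\colon(\pi_1 A)_{ab}\to S$ have images whose strong $\F$-closure is all of $S$; assembling them into $\alpha=(\alpha_1,\dots,\alpha_k)\colon(\pi_1 A)_{ab}\to S^k$ and setting $P=\Ima(\alpha)$ produces a finite abelian $p$-group which is a quotient of $(\pi_1 A)_{ab}$, and the coordinate projections restrict to homomorphisms $P\to S$ with images $\Ima(\alpha_i)$, so $Cl_{\F}(P)=S$. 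The surjection $(\pi_1 A)_{ab}\twoheadrightarrow P$ yields a map $g\colon A\to BP$ that is onto on fundamental groups, whence the homotopy cofibre $C$ of $\ev\colon\bigvee_{[A,BP]_{\ast}}A\to BP$ is simply connected; by Chach\'olski's fibration \cite{MR1408539}, $BP$ is $A$-cellular precisely when $C$ is $\Sigma A$-acyclic.

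The main obstacle is exactly this last cellular inequality $BP\in\mathcal{C}(A)$: one must show that a finite abelian $p$-group whose classifying space receives a $\pi_1$-surjection from $A$ is $A$-cellular. I expect this to be the technical heart, to be proved by analysing the simply connected cofibre $C$ above and showing $P_{\Sigma A}C\simeq\ast$, rather than by any formal manipulation of $Cl_{\F}$. I would record here that the hypothesis $Cl_{\F}((\pi_1 A)_{ab})=S$ with $S\neq 1$ already forces a nontrivial homomorphism $(\pi_1 A)_{ab}\to S$, hence a nontrivial finite $p$-quotient of $(\pi_1 A)_{ab}$ and thus $H_1(A;\mathbb{F}_p)\neq 0$; this mod $p$ homology is precisely the input that makes the acyclicity of $C$ accessible and keeps the inequality from being obstructed.
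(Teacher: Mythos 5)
Your treatments of items (a), (b) and (d) are correct and coincide with the paper's: each is reduced to a computation of $Cl_{\F}(-)$ followed by an application of Theorem~\ref{t:BP << Lp}. In (b), your observation that every strongly $\F'$-closed subgroup is strongly $\F$-closed, whence $Cl_{\F}(P)\leq Cl_{\F'}(P)$, is exactly the inclusion the paper invokes; in (d), your verification that $Cl_{\F}(\mathbb{Z}/p^m)=Cl_{\F}(\Omega_{p^m}(S))$ together with the remark that $\Omega_{p^m}(S)=S$ once $p^m$ exceeds the exponent of $S$ is the paper's argument written out in full.

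Item (c), however, is not proved. Your reduction is sensible: produce a finite abelian $p$-group quotient $P$ of $(\pi_1 A)_{ab}$ with $Cl_{\F}(P)=S$ (this construction is fine, and it even makes the application of Theorem~\ref{t:BP << Lp} literal, since the theorem is stated only for finite $p$-groups), so that $\Linp\in\mathcal{C}(BP)$ and everything hinges on $BP\in\mathcal{C}(A)$. But you then explicitly defer this last claim as ``the technical heart,'' offering only the expectation that one can show $P_{\Sigma A}C\simeq\ast$ for the Chach\'olski cofibre $C$. That is precisely the entire content of (c), so the proposal has a genuine gap. The idea you are missing is the one the paper uses, and it is soft: by the Dold--Thom theorem, $SP^{\infty} A\simeq \prod_{i\geq 1}K(H_i(A;\mathbb{Z}),i)$, and $SP^{\infty} A$ is $A$-cellular by \cite[Corollary 4.A.2.1]{MR1392221}; since $B(\pi_1 A)_{ab}\simeq K(H_1(A;\mathbb{Z}),1)$ is a retract of this product and cellular classes are closed under retracts \cite[2.D]{MR1392221}, the space $B(\pi_1 A)_{ab}$ is $A$-cellular. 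Combining this with the hypothesis $Cl_{\F}((\pi_1 A)_{ab})=S$, Theorem~\ref{t:BP << Lp} and transitivity of cellular classes then yields (c) with no analysis of any cofibre at all.

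Two cautions about the fallback you sketch. First, the heuristic ``$H_1(A;\mathbb{F}_p)\neq 0$ makes the acyclicity of $C$ accessible'' is unreliable: nontrivial mod $p$ homology of $A$ in degree one does not in general force classifying spaces of finite abelian $p$-groups to be $A$-cellular (by part (d), $B\mathbb{Z}/p^2$ is not $B\mathbb{Z}/p$-cellular, although $H_1(\bz{p};\mathbb{F}_p)\neq 0$), so any successful argument must exploit the integral surjection $(\pi_1 A)_{ab}\twoheadrightarrow P$ in an essential way --- which is exactly what the Dold--Thom argument does. Second, even after importing Dold--Thom, your route still requires a lemma of the form ``$BP$ is $K(M,1)$-cellular whenever $P$ is a finite abelian $p$-group quotient of the abelian group $M$,'' and such quotient statements are not formal: for general abelian quotients they fail (e.g.\ $K(\mathbb{Q}/\mathbb{Z},1)$ is not $K(\mathbb{Q},1)$-cellular, since $K(\mathbb{Q},1)$ is mod $p$ acyclic and cellular classes preserve mod $p$ acyclicity, while $H_2(K(\mathbb{Q}/\mathbb{Z},1);\mathbb{F}_p)\neq 0$). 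So the step you postponed is a real one, and the paper's argument is the efficient way around it.
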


\begin{proof}
\begin{enumerate}[(a)]
\item Since $Cl_{\F}(S) = S$, it is a direct application of \cref{t:BP << Lp}.
\item It follows from the inclusions $Cl_\F(P)\leq Cl_{\F'}(P)\leq S$. Note that $\F_S(S)\subset \F$.
\item It is clear from the inclusion $Cl_\F(Q)\leq Cl_\F(P)\leq S$ (it could also be deduced from the fact that $BP$ is $BQ$-cellular).
\item Since $SP^\infty A \simeq \prod_{i \geq 1} K(H_i(A;\z{}),i)$ is $A$-cellular by \cite[Corollary 4.A.2.1]{MR1392221}, then $B(\pi_1 A)_{ab} \simeq K(H_1(A;\z{}),1)$ is 
also $A$-cellular from \cite[2.D]{MR1392221}. Finally, $B\F$ is $B(\pi_1 A)_{ab}$-cellular by (a) if $Cl_\F((\pi_1 A)_{ab})=S$.
\item From the definition we have an equality $Cl_\F(\Omega_{p^m}(S))\cong Cl_\F(\mathbb Z/p^m)$. Now, $B\Omega_{p^m}(S)$ is $B\mathbb Z/p^m$-cellular and there exists an $m_0 \geq 0$ such that $S$ is generated by elements of order a power of $p$ less than or equal to $p^{m_0}$.\qedhere
\end{enumerate}
\end{proof}

\begin{remark}
From \cref{c:BS << Lp}, we see that $BP$ is $BQ$-cellular if there is an epimorphism $Q\twoheadrightarrow P$. In particular $\bz{p}$ is $BQ$-cellular for any finite $p$-group $Q$. This fact was already known due to \cite[Corollary 7.2]{MR3416113}.
\end{remark}
\begin{remark}
Given a finite group $G$, it follows from \cref{c:BS << Lp} that $BG^\wedge_p$ is $BS$-cellular. In particular, it is $BS$-acyclic which implies that it is $\bz{p}$-acyclic (by \cite[Lemma 6.13]{MR1397728}). This result was previously established by Flores in \cite[Proposition 3.14]{MR2272149}.
\end{remark}
The strategy of proof for \cref{t:BP << Lp} goes by analyzing  the fiber sequence described by Chachólski in \cite[Theorem 20.5]{MR1408539}
\[
CW_{BP}(\Linp)\overset{c}{\longrightarrow}\Linp \overset{r}{\longrightarrow}P_{\Sigma BP} C,
\]
where $C$ is the homotopy cofiber of the evaluation map $ev \colon \bigvee_{[BP, \Linp]_{\ast}} BP \rightarrow \Linp$ and $r$ is the composite 
$\Linp \rightarrow C \rightarrow P_{\Sigma BP} C$. Since $\pi_1(\Linp)$ is a finite $p$-group, by \cite[Proposition~2.9]{MR1257059}, the same holds for $P_{\Sigma BP} C$. Therefore, Bousfield-Kan $p$-completion of the previous homotopy fibration is again a homotopy fiber sequence (\cite[II.5.1]{MR0365573})
\[
CW_{BP}(\Linp)^\wedge_p\overset{c^\wedge_p}{\longrightarrow} \Linp \overset{r^\wedge_p}\longrightarrow \p{(P_{\Sigma BP}C)}{p}. 
\] 

The strategy is  to compute the kernel of $r^\wedge_p$. 
To apply the theory of kernels, developed in \cref{s:ker f}, we need to show that $\p{(P_{\Sigma BP}C)}{p}$ is a $p$-complete $\sbz{p}$-null space.

\begin{lemma}
\label{l:Sigma null}
If $X$ is a simply connected space and $P$ is a finite $p$-group, then $\p{(P_{\Sigma BP}X)}{p}$ is $\Sigma B\mathbb Z/p$-null.
\end{lemma}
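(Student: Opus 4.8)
The plan is to loop the statement, reduce it to an Eilenberg--MacLane computation, and prove the converse of Remark~\ref{r:BZp null hence BP} in the $p$-complete nilpotent setting. Write $W:=P_{\Sigma BP}X$, so that $W$ is $\Sigma BP$-null by construction. Since $X$ is $1$-connected and $\Sigma BP$ is $1$-connected, $W$ is $1$-connected (connectivity of the nullification, \cite[2.9]{MR1257059}), hence nilpotent, and $\p W p$ is $1$-connected and $p$-complete. By adjunction, $\p W p$ being $\Sigma B\mathbb{Z}/p$-null is equivalent to $V:=\Omega\,\p W p$ being $B\mathbb{Z}/p$-null, while $\map_*(BP,V)\simeq \map_*(\Sigma BP,\p W p)$. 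I would first check that $V$ is $BP$-null: the homotopy fibre of the completion $W\to \p W p$ has homotopy groups that are $\mathbb{Z}[\tfrac1p]$-modules (Sullivan arithmetic square), so it is $\bz p$-null and hence $\Sigma BP$-null by Remark~\ref{r:BZp null hence BP}; therefore $\map_*(\Sigma BP,\p W p)\simeq \map_*(\Sigma BP,W)\simeq\ast$. Thus $V$ is a connected nilpotent $p$-complete space which is $BP$-null, and it remains to show such a space is $B\mathbb{Z}/p$-null.

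For this I would climb the principal Postnikov tower of $V$. Mapping $BP$ into each principal fibration $K(\pi_n,n)\to V_n\to V_{n-1}$ shows inductively that $V$ is $BP$-null if and only if each $K(A,n)$ (with $A=\pi_n V$ a $p$-complete abelian group) is $BP$-null, and the identical statement holds for $B\mathbb{Z}/p$; so it suffices to treat Eilenberg--MacLane spaces. Now $K(A,n)$ is $BP$-null exactly when $\tilde H^k(BP;A)=0$ for $0\le k\le n$, and likewise for $B\mathbb{Z}/p$. When $n=1$ both conditions reduce to $\Hom(P,A)=0$, respectively $\Hom(\mathbb{Z}/p,A)=A[p]=0$, which are equivalent because $P$ is a nontrivial finite $p$-group. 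When $n\ge 2$ the vanishing of $\tilde H^2(BP;A)\supseteq \Ext(P^{ab},A)$ forces $A$ to be $p$-divisible, while $A[p]=0$ together with $p$-completeness forces $A$ torsion-free; a nonzero $p$-complete group cannot be $p$-divisible, so $A=0$ and $B\mathbb{Z}/p$-nullity is automatic. This yields that $V$ is $B\mathbb{Z}/p$-null, hence $\p W p$ is $\Sigma B\mathbb{Z}/p$-null, as desired.

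The main obstacle is the reversal of Remark~\ref{r:BZp null hence BP}: the implication ``$B\mathbb{Z}/p$-null $\Rightarrow$ $BP$-null'' is formal (from the $B\mathbb{Z}/p$-acyclicity of $BP$), whereas its converse is \emph{false} for general spaces --- for instance the non-split surjection $\mathbb{Z}/p^2\twoheadrightarrow\mathbb{Z}/p$ precludes any retraction argument --- so it is essential to have first arranged, via the $1$-connectedness of $X$, that the space in play is nilpotent and $p$-complete. The two points requiring care are the preservation of $\Sigma BP$-nullity under $p$-completion (handled by the Sullivan square and the $\bz p$-nullity of the fibre) and the convergence of the Postnikov argument for the possibly infinite tower of $V$; both are routine once the Eilenberg--MacLane dichotomy above is in hand. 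Alternatively one could invoke Bousfield's analysis in \cite{MR1257059} to the effect that $\Sigma BP$ and $\Sigma B\mathbb{Z}/p$ determine the same nullification on $1$-connected spaces, but I prefer the direct route, since it uses only tools already developed in Section~\ref{s:ker f}.
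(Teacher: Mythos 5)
Your reductions (looping, commuting loops with $p$-completion and with nullification, then removing the completion) run parallel to the paper's proof, and your Eilenberg--MacLane dichotomy is correct as a statement about $K(A,n)$'s. The fatal step is the Postnikov claim: you assert that $V$ is $BP$-null \emph{if and only if} each $K(\pi_nV,n)$ is $BP$-null. Only the ``if'' direction (induction up the tower plus convergence) is valid; the ``only if'' direction, which is the one your chain of implications actually uses, is false. Concretely, take $V=\Omega\,\p{(S^3)}{p}$, a space of exactly the kind at issue: $\p{(S^3)}{p}$ is $1$-connected, $p$-complete and $\Sigma BP$-null (Miller's theorem \cite{MR750716} gives $\mapp(BP,S^3)\simeq\ast$, and completion invariance for nilpotent spaces transfers this to $\p{(S^3)}{p}$), so $P_{\Sigma BP}\p{(S^3)}{p}\simeq \p{(S^3)}{p}$ and $V$ is a connected, nilpotent, $p$-complete, $BP$-null loop space; yet its bottom Postnikov piece $K(\pi_2V,2)=K(\pz,2)$ is not $BP$-null, because $H^2(BP;\pz)\supseteq\Ext(P_{ab},\pz)\neq 0$. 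Your own dichotomy makes the failure stark: if the ``only if'' direction held, then (since the groups $\pi_nV$ are Ext-$p$-complete) every $1$-connected $p$-complete $\Sigma BP$-null space would be a $K(A,2)$, which $\p{(S^3)}{p}$ already contradicts. Nullity is simply not detected on Postnikov sections: mapping $BP$ into a principal fibration yields a fibration of mapping spaces, and contractibility of the total space of that fibration says nothing about its base or fibre separately.

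The statement you were trying to prove by this induction --- that a nilpotent $p$-complete $BP$-null space is $\bz{p}$-null --- is the genuine, non-formal content of the lemma; it belongs to the Sullivan-conjecture circle of ideas and cannot be recovered by a formal tower argument. This is precisely where the paper, after making the same reductions ($\mapp(\sbz{p},\p{(P_{\Sigma BP}X)}{p})\simeq\mapp(\bz{p},\p{(P_{BP}\Omega X)}{p})\simeq\mapp(\bz{p},P_{BP}\Omega X)$, via \cite[V.4.6]{MR0365573}, \cite[3.A.1]{MR1392221} and Miller's theorem), stops and quotes Dwyer's acyclicity lemma \cite[Lemma 6.13]{MR1397728} to identify $BP$-nullity with $\bz{p}$-nullity for the space at hand. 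So the route you mention at the end and discard --- invoking the Bousfield/Dwyer comparison of the nullifications determined by $BP$ and $\bz{p}$ --- is not an optional shortcut; it, or an equivalent appeal to Miller--Lannes technology, is the only way to close the argument, and it is what the paper does. Two smaller points: your use of the Sullivan arithmetic square to conclude that the fibre of $W\to\p{W}{p}$ has uniquely $p$-divisible homotopy groups requires finitely generated homotopy groups, which $W=P_{\Sigma BP}X$ need not have (the paper instead applies Miller's theorem, i.e.\ completion invariance of $\mapp(BP,-)$ on nilpotent spaces, which also handles $\pi_0$ of the mapping space); and your parenthetical claim that ``$BP$-null $\Rightarrow$ $\bz{p}$-null'' fails for general spaces is unsubstantiated --- the non-split surjection $\mathbb{Z}/p^2\twoheadrightarrow\mathbb{Z}/p$ only shows that one naive retraction argument fails, not that the implication does.
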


\begin{proof} We have the following weak homotopy equivalences
$$\map_*(\Sigma B\mathbb Z/p, \p{(P_{\Sigma BP}X)}{p})\simeq \map_*( B\mathbb Z/p, \Omega \p{(P_{\Sigma BP}X)}{p})\simeq \map_*( B\mathbb Z/p, \p{(\Omega P_{\Sigma BP}X)}{p})$$
where the last equivalence holds by \cite[V.4.6 (ii)]{MR0365573} since $X$ is simply connected (and so is $P_{\Sigma BP}X$ by \cite[2.9]{MR1257059}). 

The commutation rules between nullification functors and loops in \cite[3.A.1]{MR1392221} show that $\Omega P_{\Sigma BP}X\simeq P_{BP}(\Omega X)$. Finally,
$\map_*( B\mathbb Z/p, \p{(P_{BP}\Omega X)}{p})\simeq \map_*( B\mathbb Z/p, (P_{BP}\Omega X))\simeq *$ where the first equivalence follows from Miller's theorem \cite[Thm 1.5]{MR750716} and the second from the fact that $BP$ is $B\mathbb Z/p$-acyclic (\cite[Lemma 6.13]{MR1397728}).
\end{proof}

A key step in the proof of \cref{t:BP << Lp} is the following computation of the kernel of the map $r^\wedge_p$.

\begin{prop}
\label{p:Kerf = Cl}
Let $(S,\F)$ be a saturated fusion system. Then $\ker(\p{r}{p}) = Cl_{\F}(P)$.
\end{prop}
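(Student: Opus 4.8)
The plan is to prove the two inclusions $Cl_{\F}(P)\subseteq \ker(\p{r}{p})$ and $\ker(\p{r}{p})\subseteq Cl_{\F}(P)$ separately. Throughout I use that $Z:=\p{(P_{\Sigma BP}C)}{p}$ is connected, $p$-complete and $\sbz{p}$-null (Lemma~\ref{l:Sigma null}), so that $\ker(\p{r}{p})$ makes sense (Definition~\ref{d:kerf}) and is strongly $\F$-closed (Proposition~\ref{p:kerf strongly}). I also record that, since $Z$ is $\sbz{p}$-null, $\Omega Z$ is $\bz{p}$-null, hence $BQ$-null for every finite $p$-group $Q$ (Remark~\ref{r:BZp null hence BP}); thus $Z$ is $\Sigma BQ$-null and each $\mapp(BQ,Z)$ is homotopically discrete.

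For $Cl_{\F}(P)\subseteq\ker(\p{r}{p})$, since $\ker(\p{r}{p})$ is strongly $\F$-closed and $Cl_{\F}(P)$ is the smallest strongly $\F$-closed subgroup containing $\varphi(P)$ for all $\varphi\in\Hom(P,S)$, it suffices to show $\varphi(P)\subseteq\ker(\p{r}{p})$ for each such $\varphi$. The pointed map $\Theta\circ B\varphi\colon BP\to\Linp$ represents a class in $[BP,\Linp]_{\ast}$, hence is (up to homotopy) one of the wedge summands of $ev$; therefore $BP\xrightarrow{\Theta\circ B\varphi}\Linp\to C$ is null-homotopic, and so is $\p{r}{p}\circ\Theta\circ B\varphi$. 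Factoring $\varphi$ as $P\xrightarrow{\bar\varphi}\varphi(P)\hookrightarrow S$ and applying Zabrodsky's lemma to the fibration $B(\ker\varphi)\to BP\xrightarrow{B\bar\varphi}B\varphi(P)$, whose fibre $B(\ker\varphi)$ satisfies $\mapp(B(\ker\varphi),Z)\simeq *$ up to $\pi_0$, exactly as in Step~2 of the proof of Theorem~\ref{t:Dwyer p local}, the induced bijection on path components forces $\p{r}{p}|_{B\varphi(P)}\simeq\ast$. Restricting to $B\langle g\rangle$ for each $g\in\varphi(P)$ yields $\varphi(P)\subseteq\ker(\p{r}{p})$.

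For the reverse inclusion, write $K:=Cl_{\F}(P)$. By Proposition~\ref{p:exist m and f} there is an $m\geq 0$ and a map $f\colon\Linp\to \p{(B(\Sigma_{|S/K|}\wr\Sigma_{p^m}))}{p}$ with $\ker(f)=K$; call its target $W$, which is connected, $p$-complete and $\sbz{p}$-null, hence $\Sigma BP$-null. The idea is to factor $f$ through $\p{r}{p}$. Every map $BP\to\Linp$ has the form $\Theta\circ B\varphi$ for some $\varphi\in\Hom(P,S)$ by \cite[Theorem 4.4]{MR1992826}, and since $\varphi(P)\subseteq Cl_{\F}(P)=K=\ker(f)$, all cyclic restrictions of $f|_{B\varphi(P)}$ are null; hence $f|_{B\varphi(P)}\simeq\ast$ by \cite[Proposition 2.4]{MR1286829}, so $f\circ ev\simeq\ast$. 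As $W$ is $\Sigma BP$-null, $f$ factors through $C$ and then, by the universal property of $P_{\Sigma BP}(-)$, through $P_{\Sigma BP}C$; finally, because $W$ is $p$-complete, this descends to $\bar f\colon Z\to W$ with $\bar f\circ\p{r}{p}\simeq f$. Consequently, if $g\in\ker(\p{r}{p})$ then $f|_{B\langle g\rangle}\simeq \bar f\circ\p{r}{p}|_{B\langle g\rangle}\simeq\ast$, so $g\in\ker(f)=K$, giving $\ker(\p{r}{p})\subseteq Cl_{\F}(P)$.

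I expect the main obstacle to be the factorization in this second inclusion: one must confirm that the finite-group target $W$ is $\Sigma BP$-null and that $f$ annihilates every map $BP\to\Linp$, the latter relying on upgrading the elementwise condition $\varphi(P)\subseteq\ker(f)$ to the nullity of $f$ on all of $B\varphi(P)$ via Notbohm's lemma. A secondary bookkeeping point is the passage from a factorization over $P_{\Sigma BP}C$ to one over $Z=\p{(P_{\Sigma BP}C)}{p}$, together with the implicit reduction (as in Step~2 of Proposition~\ref{p:pi1finite}, replacing $\Linp$ by a $BP$-equivalent $B\F'$) ensuring that $C$ is simply connected so that Lemma~\ref{l:Sigma null} applies and $\ker(\p{r}{p})$ is defined.
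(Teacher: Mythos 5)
Your proof is correct, and although it shares the paper's skeleton --- proving the two inclusions separately, using that $\ker(\p{r}{p})$ is strongly $\F$-closed (Proposition~\ref{p:kerf strongly}) and using Proposition~\ref{p:exist m and f} to realize $Cl_{\F}(P)$ as the kernel of a map $k$ into a $p$-completed classifying space of a finite group --- your implementation of both halves is genuinely different: you argue entirely on the cofibre side of Chach\'olski's fibration, while the paper argues on the fibre side. For $Cl_{\F}(P)\subseteq\ker(\p{r}{p})$, the paper lifts any map $BP\to\Linp$ through $\p{CW_{BP}(\Linp)}{p}$ by the universal property of cellularization, whereas you observe that each wedge summand of $ev$ dies in $C$ and then descend nullity from $BP$ to $B\varphi(P)$ by Zabrodsky's lemma; this descent along the surjection $P\to\varphi(P)$ is exactly the step that the paper's one-line argument leaves implicit (an element of $\varphi(P)$ need not have a preimage of the same order), so making it explicit is a gain in rigor. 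For $\ker(\p{r}{p})\subseteq Cl_{\F}(P)$, the paper lifts $B\ker(\p{r}{p})$ to $\p{CW_{BP}(\Linp)}{p}$ and kills $k\circ c$ via Proposition~\ref{p:map(CW X, Z) = *}, whereas you factor $k$ itself through $Z=\p{(P_{\Sigma BP}C)}{p}$, using co-exactness of the cofibre sequence, the universal property of $P_{\Sigma BP}$, and $p$-completion (this last step needs $P_{\Sigma BP}C$ to be $p$-good, which holds since its fundamental group is a finite $p$-group), after which the comparison of kernels is immediate. Both routes rest on the same external inputs, \cite[Theorem 4.4]{MR1992826} and Notbohm's cyclic detection \cite[Proposition 2.4]{MR1286829}; what yours buys is that it bypasses the cellularization functor altogether --- no universal property of $CW_{BP}$, no Proposition~\ref{p:map(CW X, Z) = *}, and no appeal to Proposition~\ref{p:pi1finite} --- at the modest cost of the completion bookkeeping for $P_{\Sigma BP}C$. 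Finally, the standing hypothesis that $Z$ is $\sbz{p}$-null (needed for $\ker(\p{r}{p})$ to be defined, strongly $\F$-closed, and amenable to Zabrodsky's lemma) is a requirement your proof and the paper's proof share; the paper supplies it only at the point of use in Theorem~\ref{t:BP << Lp} via Lemma~\ref{l:Sigma null} and Lemma~\ref{l:s=Cl}, so your flagging it as an unresolved reduction is appropriate rather than a defect relative to the paper.
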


\begin{proof}
We start by showing that $Cl_{\F}(P)\leq \ker(\p{r}{p})$. Since $\ker(\p{r}{p})$ is strongly $\F$-closed, it is enough to check that $f(P)\leq \ker(\p{r}{p})$ for any morphism $f\colon P\rightarrow S$. Let $Bf\colon BP\rightarrow BS$ be the induced map between classifying spaces. Given the inclusion of the Sylow $p$-subgroup $\Theta$, the composite $\Theta\circ Bf\colon BP\rightarrow BS \rightarrow B\F$ belongs to the mapping space $\mapp(BP,\Linp)$. But, by the universal property of  $CW_{BP}$, 
\[\mapp(BP,\Linp)\simeq \mapp(BP,CW_{BP}(\Linp)).\]
so the composite $r\circ \Theta \circ Bf$ is null homotopic. In particular, $\p{r}{p}\circ \Theta \circ Bf$ is also null-homotopic, and  $f(P)\leq \ker(\p{r}{p})$.

Next we check that $\ker(\p{r}{p}) \leq Cl_{\F}(P)$. According to \cref{t:characterization_kernels}, there exists a map $k \colon \Linp \rightarrow Z$ such that $Z$ is $p$-complete and $\Sigma B\z/p$-null such that
$\ker(k) = Cl_{\F}(P)$.

Let $\iota \colon B\ker(\p{r}{p})\rightarrow BS\overset\Theta \rightarrow B\mathcal F$ be the inclusion via the Sylow $p$-subgroup of the $\ker(\p{r}{p})$.
Then, by \cref{r:propertykernel}, $\ker(\p{r}{p})\leq \ker(k)=Cl_{\F}(P)$ if the composite $k\circ \iota$ is null-homotopic. By definition of the kernel, $\p rp\circ i\simeq \ast$, so there is $\tilde{\iota}\colon B\ker(\p{r}{p})\rightarrow CW_{BP}(\Linp)$ such that $c\circ \tilde{\iota}\simeq \iota$.
\[
\begin{tikzcd}
& \p{CW_{BP}(\Linp)}{p} \arrow[rd, "\p{(k\circ c)}p"] \arrow[d, "\p{c}{p}"] &   \\
B\ker(\p{r}{p}) \arrow[ru, "\tilde \iota", dashed] \arrow[r, "\iota"] \arrow[rd, "\simeq\ast"'] & \Linp \arrow[d, "\p{r}{p}"] \arrow[r, "k"]                                & Z \\
& \p{(P_{\Sigma BP} C)}{p}.                                                 & 
\end{tikzcd}
\]
But then we can reduce further to show that $k\circ c\colon CW_{BP}(\Linp)\rightarrow Z$ is null-homotopic. Notice that $Z$ is $\sbz{p}$-null then, by \cref{lem:Bzpnull->BPnull}, $Z$ is $\Sigma BP$-null. Then, by \cref{p:mapCWtoZ}, $k\circ c$ is null-homotopic if and only if $(k\circ c)_*\colon [BP,{CW_{BP}(\Linp)}]_* \rightarrow [BP,Z]_*$ is trivial. That is, for any pointed map $f\colon BP\to CW_{BP}(\Linp)$, the composite $k\circ c\circ f\simeq \ast$. By \cite[Theorem 4.4 (a)]{MR1992826}, $c\circ f\colon BP\to B\F$ is homotopic to $\Theta\circ B\rho\colon BP\to BS\to \Linp$, where $\rho \in \Hom(P,S)$. Since we have defined $k$ with the property $\ker(k)=Cl_\F(P)$, then 
$
k\circ c\circ f\simeq k\circ \Theta\circ B\rho\simeq \ast.
$
\end{proof}

We are now ready to prove the main theorem of this paper.

\begin{proof}[Proof of \cref{t:BP << Lp}]
First assume that $\Linp$ is $BP$-cellular. Then $P_{\Sigma BP} C$ is contractible and ${r \simeq *}$. This implies that
$
Cl_\F(P)\cong \ker(\p r p)=S
$
where the first equality holds by \cref{p:Kerf = Cl} and the second one by the contractibility of $r$.

Now assume that $S=Cl_\F(P)$ and consider the Chachólski fibration
\[
CW_{BP}(\Linp)\overset{c}\longrightarrow\Linp \overset{r}\longrightarrow P_{\Sigma BP}C.
\]
We first prove that $P_{\Sigma BP}C$ is simply connected and $\Sigma B\mathbb Z/p$-null. Let $N$ be the normal subgroup of $S$ generated by the images of all group morphism $\varphi\colon P\to S$. By \cref{r:N<CL<NO} we have
\begin{equation}\label{eq:prf:CL<NO<S}
Cl_{\F}(P)\leq N\OFp \leq S
\end{equation}
But $S=Cl_\F(P)$, so the inequalities of \eqref{eq:prf:CL<NO<S} become equalities. Then \cref{r:Pi1ofC} implies that  $C$ is simply connected, and therefore, $P_{\Sigma BP}C$ is so and \cref{l:Sigma null} implies  that  it is also $\Sigma B\mathbb Z/p$-null.
Notice that $P_{\Sigma BP}C$ being simply connected not only ensures that we can apply the theory of kernels but also implies that it is nilpotent. Moreover,
\[
\tilde{H}_*\left(\Linp;\z\left[\tfrac{1}{p}\right]\right)\hookrightarrow  \tilde{H}_*\left(BS;\z\left[\tfrac{1}{p}\right]\right)=0.
\]
Then, by Miller's theorem \cite[Theorem 1.5]{MR750716}, $r$ is null-homotopic if and only if $\p{r}{p}$ is null-homotopic, and by \cref{t:Dwyer p local}, if and only if $\ker(\p r p)=S$. Then we are done by \cref{c:ifrnull_CW}.
\end{proof}

Let $G$ be a finite group. The case when $G$ is generated by elements of order $p$ is well studied by R. Flores and R. Foote in \cite{MR2823972}. We start by giving a simple example where $G$ is not generated by elements of order $p^i$.

\begin{example}
Consider the permutation group on $3$ elements, $\Sigma_3$. It is generated by transpositions, i.e, by elements of order $2$, but the Sylow $3$-subgroup of $\Sigma_3$ is $S=\z{/3}$. Therefore, $BS$ is $\bz{3^r}$-cellular for all $r \geq 1$ and hence, by \cref{c:BS << Lp}, $\p{(B \Sigma_3)}{3}$ is so.

But, notice that $B\Sigma_3$ is not $\bz{3^r}$-cellular for any $r \geq 1$: Applying $\map_*(\bz{3^r},-)$ to the homotopy fiber sequence
\[
\bz{3} \overset{Bi}{\rightarrow} B\Sigma_3 \rightarrow \bz{2},
\]
we see that $Bi$ is a $\bz{3^r}$-equivalence for any $r$, since $\map_*(\bz{3^r},\bz 2)\simeq *$. Then,  
\[CW_{\bz{3^r}}(B\Sigma_3) \simeq CW_{\bz{3^r}}(\bz{3}) \simeq \bz{3}.\]
\end{example}
We end this section by describing some situations in which we can describe $CW_{BP}(\Linp)$ when $Cl_{\F}(P) \neq S $. The strategy is to identify $\p{P_{BP}(C)}{p}$ in the $p$-completed Chachólski fibration describing $CW_{BP}(B\F)$.
\begin{coro}
\label{c:normal}
Let $(S,\F)$ be a fusion system and let $P$ be a finite $p$-group. If $Cl_{\F}(P) \vartriangleleft \F$, then $CW_{BP} (\Linp)$ is homotopy equivalent to the homotopy
fiber of $\Linp \rightarrow B(\F/Cl_{\F}(P))$.
\end{coro}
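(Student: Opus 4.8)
The plan is to deduce this corollary by feeding the output of Proposition~\ref{p:K normal} directly into Proposition~\ref{p:factor}; once the hypotheses of those two results are checked there is essentially nothing left to compute. Write $K = Cl_{\F}(P)$ and consider the map $\p{r}{p}\colon \Linp \to \p{(P_{\Sigma BP}C)}{p}$ from Chach\'olski's fibration. The first thing I would arrange is that the target is a connected, $p$-complete, $\sbz{p}$-null space, so that the kernel theory of Section~\ref{s:ker f} applies: as in the proof of Theorem~\ref{t:BP << Lp} one reduces to the case where $C$ is $1$-connected (replacing $\F$ by a $BP$-equivalent model as in Step~2 of the proof of Proposition~\ref{p:pi1finite}, which does not alter $CW_{BP}(\Linp)$), whence $\p{(P_{\Sigma BP}C)}{p}$ is $\sbz{p}$-null by Lemma~\ref{l:Sigma null}. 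By Proposition~\ref{p:Kerf = Cl} we then have $\ker(\p{r}{p}) = K$, and by hypothesis $K \vartriangleleft \F$.

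Next I would invoke Proposition~\ref{p:K normal} with $f = \p{r}{p}$ and $Z = \p{(P_{\Sigma BP}C)}{p}$. Since $K = \ker(\p{r}{p})$ is normal in $\F$, it produces a saturated fusion system $(S/K, \F/K)$ with associated linking system $\Lin/K$, a projection $pr\colon |\Lin| \to |\Lin/K|$ whose homotopy fibre is $BK$, and a factorization $\tilde{r}\colon B(\F/K) \to \p{(P_{\Sigma BP}C)}{p}$ of $\p{r}{p}$ through $\p{pr}{p}$ with trivial kernel. This is exactly the data demanded in the statement of Proposition~\ref{p:factor}, with $\F' = \F/K$ and $\pi = pr$: the relevant square commutes up to homotopy, and $\tilde{r}$ has trivial kernel, so condition (i) of that proposition holds.

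It then remains to verify condition (ii), namely that one of the two candidate fibres is $\bz{p}$-acyclic. Here the homotopy fibre $F$ of $\pi = pr$ is $BK$, and since $K$ is a finite $p$-group, $BK$ is $\bz{p}$-acyclic by Remark~\ref{r:BZp null hence BP}. Thus Proposition~\ref{p:factor} applies and gives $CW_{BP}(\Linp) \simeq F_p$, where $F_p$ is the homotopy fibre of $\p{\pi}{p} = \p{pr}{p}\colon \Linp \to \p{|\Lin/K|}{p} = B(\F/Cl_{\F}(P))$, which is precisely the homotopy fibre asserted in the statement.

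The genuine content of the argument lies entirely in Propositions~\ref{p:K normal} and~\ref{p:factor}; for this corollary the only delicate point is the verification that the target $\p{(P_{\Sigma BP}C)}{p}$ is $\sbz{p}$-null, so that both the computation $\ker(\p{r}{p}) = Cl_{\F}(P)$ and the factorization of Proposition~\ref{p:K normal} are legitimately available. I expect this to be the main (indeed essentially the only) obstacle: it is what forces the preliminary reduction to $C$ being $1$-connected together with the appeal to Lemma~\ref{l:Sigma null}, after which the normality of $K$ in $\F$ carries out all the remaining work.
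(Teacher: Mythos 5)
Your proposal is correct and follows essentially the same route as the paper: the paper's own proof of this corollary is precisely the combination you describe, namely Proposition~\ref{p:K normal} applied to $\p{r}{p}$ (whose kernel is $Cl_{\F}(P)$ by Proposition~\ref{p:Kerf = Cl}) followed by Proposition~\ref{p:factor}, with the fibre $BK$ being $\bz{p}$-acyclic because $K$ is a finite $p$-group. Your extra verification that $\p{(P_{\Sigma BP} C)}{p}$ is $\sbz{p}$-null (via the reduction to $1$-connected $C$ and Lemma~\ref{l:Sigma null}) is a point the paper's proof of this corollary leaves implicit, though it is the same reduction the paper invokes in the corollary immediately preceding this one.
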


\begin{proof}
Let $K = Cl_{\F}(P)$. Since $K$ is normal in $\F$, there is a saturated fusion system $(S/K, \F/K)$, and a map defined between the nerve of the associated linking systems $p \colon |\Lin| \rightarrow |\Lin/K|$ whose homotopy fiber is $BK$. By \cref{p:Kerf = Cl}, $Cl_\F(P)=\ker(\p{r}{p})$, so \cref{p:K normal} gives us an injective factorization of $\p{r}{p}$, $\tilde{r}\colon B(\F/Cl_\F(P))\rightarrow \p{P_{\Sigma BP}(C))}{p}$, unique up to homotopy. 

Now consider $\p{p}{p}\colon \Linp\rightarrow B(\F/Cl_\F(P))$,  the restriction $CW_{BP}(\Linp)\rightarrow B(\F/Cl_\F(P))$ is a null homotopic map since we are in the hypothesis of \cref{p:mapCWtoZ} and 
\[
{[BP,CW_{BP}(\Linp)]\rightarrow [BP, B(\F/Cl_\F(P))]}
\]
is trivial  (any map $BP\rightarrow \Linp$ factors through $Cl_\F(P)$).
Moreover, in this situation, $\map(CW_{BP}(\Linp),P_{\Sigma BP}(C))_c\rightarrow P_{\Sigma BP}(C)$ is an equivalence. The hypotheses of Zabrodsky lemma are satisfied and we have a factorization of $\p{p}{p}$, via $\tilde{p}\colon P_{\Sigma BP}(C)\rightarrow B(\F/Cl_\F(P))$. 

Note that the composites $\tilde{r}\circ \p{\tilde{p}}{p}$ and $\p{\tilde{p}}{p}\circ \tilde{r}$ are homotopic to the identity by the uniqueness of such factorizations since they factor the identity on $\Linp$.

Finally, we get that the homotopy fiber of $\p{r}{p}\colon \Linp \rightarrow B(\F/Cl_\F(P))$ is $\p{CW_{BP}(\Linp)}{p}$ whose $\Q$-completion is contractible since the $\Q$-completion of the classifying space of a fusion system is so. Then this homotopy fiber is in fact $CW_{BP}(\Linp)$.
\end{proof}

\begin{example}\label{e:NG(S) controls fus}
Let $G$ be a finite group and let $S$ be a $p$-Sylow subgroup of $G$. Assume that $N_G(S)$ controls fusion in $G$. Then $\p{BN_G(S)}{p} \simeq \p{BG}{p}$ and $S$ is normal in 
$\F_S(N_G(S))$. On account of \cref{c:normal}, for all finite $p$-group $P$, $CW_{BP} (\p{BG}{p})$ is equivalent to the homotopy fiber of 
$\p{BN_G(S)}{p} \rightarrow \p{B(N_G(S)/Cl_{\F_S(N_G(S))}(P))}{p}$.
\end{example}

\begin{example}
Let $G = \z{/p^n} \wr \z{/q} = (\z{/p^n})^q \rtimes \z{/q}$, when $p \neq q$ and $n \geq 1$. The Sylow 
$p$-subgroup of $G$ is $S = (\z{/p^n})^q$ and $Cl_{\F_S(G)}(\z{/p^r}) = (\z{/p^r})^q$ is abelian and hence, by \cref{c:normal resistant}, normal in $\F_S(G)$. Applying  \cref{t:BP << Lp},
$\p{BG}{p}$ is $\bz{p^r}$-cellular if and only if $r \geq n$. Then $CW_{\bz{p^r}}(\p{BG}{p})$ is equivalent to the homotopy fiber of 
$\p{BG}{p} \rightarrow \p{B(G/(\z{/p^r})^q)}{p}$ by \cref{c:normal}.
\end{example}

\begin{example}
Other explicit examples appear in \cite[Example 5.2]{MR2351607}. The authors proved that the normalizer of the Sylow of the Suzuki group
$\sz(2^n)$, with $n$ an odd integer at least $3$, is $N_{\sz(2^n)}(S) = S \rtimes \z{/(2^n-1)}$ and it controls fusion in $\sz(2^n)$. In this case, $S$ is $\bz{2^m}$-cellular for
all $m \geq 2$ and hence $\p{\bsz(2^n)}{2}$ is so. Moreover, $Cl_{\F_S(\sz(2^n))}(\z{/2}) \cong (\z{/2})^n$ and hence $CW_{\bz{2}}(\p{\bsz(2^n)}{2})$ is equivalent to the 
homotopy fiber of 
\[
\p{B N_{\sz(2^n)}(S)}{2} \rightarrow \p{B(N_{\sz(2^n)}(S)/(\z{/2})^n)}{2}.
\]
\end{example}

\section{The cellularization of the classifying spaces of a family of exotic fusion systems}%
\label{s:exotic example} 

In this section we show how our methods apply to describe the cellularization of classifying spaces of a family of exotic fusion systems. The main reference for this section is \cite{MR2272147}.

Let $S=B(3,r;0,\gamma,0)$, for $r \geq 4$ and $\gamma = 0,1,2$, be the family of finite $3$-groups of order $3^r$ generated by $\{s,s_1, \ldots, s_{r-1}\}$ and relations
\begin{itemize}\setlength{\itemsep}{.5em}
\item $s_i = [s_{i-1},s]$ for all $i \in \{2, \ldots, r-1\}$,
\item $[s_1, s_i] = 1$ for all $i \in \{2, \ldots, r-1\}$,
\item $s_1^3 s_2^3 s_3^{\phantom{3}} = s_{r-1}^\gamma$,
\item $s_i^3 s_{i+1}^3 s_{i+2}^{\phantom{3}} = 1$ for all $i \in \{2, \ldots, r-1\}$, and assuming $s_r=s_{r+1}=1$.
\item $s^3=1$.
\end{itemize}
In \cite[Proposition A.9]{MR2272147}, the authors show that the center of $S$ is the subgroup
$
{Z(S)=\langle s_{r-1}\rangle.}
$
The normal subgroup $\langle s_1,\ldots,s_{r-1} \rangle=\langle s_1,s_2\rangle$ of $S$ is of index $3$ and the corresponding group extension is split. There are group isomorphisms
\begin{equation}\label{equation:exotic semidirect product}
B(3,r;0,\gamma,0) = \langle s_1,s_2 \rangle \rtimes \langle s \rangle = \left\{ \begin{array}{ll}
(\z{/3^m} \times \z{/3^m})     \rtimes \z{/3} & \mbox{, if $r = 2m + 1$,} \\
(\z{/3^m} \times \z{/3^{m+1}}) \rtimes \z{/3} & \mbox{, if $r = 2m$.}
\end{array}
\right.
\end{equation}
In \cite[Theorem 5.10]{MR2272147}, the authors construct families of exotic $3$-local finite groups $\F$ whose Sylow $3$-subgroup is $S=B(3,r;0,\gamma,0)$. 

\begin{prop}\label{c:exotic example}
Let $\F$ be an exotic fusion system over $B(3,r;0,\gamma,0)$ such that $\F$ contains  a rank two elementary abelian group which is $\F$-Alperin (see the classification given in  \cite[Theorem 5.10]{MR2272147}).
Then 
\begin{enumerate}[(i)]
\item If $\gamma = 0$, then $\Linp$ is $\bz{3^l}$-cellular for all $l \geq 1$.
\item If $\gamma \neq 0$, then $\Linp$ is $\bz{3^l}$-cellular if and only if $l \geq 2$. For $l=1$, we have ${Cl_{\F}(\z{/3})=\langle s,s_2\rangle}$.
\end{enumerate}
\end{prop}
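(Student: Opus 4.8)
The plan is to reduce the whole statement to a group-theoretic computation and feed it into the main theorem. By Theorem~\ref{t:BP << Lp} applied to $P=\z{/3^l}$ (equivalently, by Corollary~\ref{c:BS << Lp}(d)), the space $\Linp$ is $\bz{3^l}$-cellular if and only if $S=Cl_{\F}(\z{/3^l})$, and $Cl_{\F}(\z{/3^l})$ is the smallest strongly $\F$-closed subgroup of $S$ containing $\Omega_{3^l}(S)$, the subgroup generated by the elements of order dividing $3^l$. Since $\Omega_3(S)\leq\Omega_9(S)\leq\cdots$, the closures form a nested chain $Cl_{\F}(\z{/3})\leq Cl_{\F}(\z{/9})\leq\cdots$ of strongly $\F$-closed subgroups, so there is a threshold $l_0$ with $\Linp$ being $\bz{3^l}$-cellular exactly for $l\geq l_0$. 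Thus the proposition amounts to showing $l_0=1$ for $\gamma=0$, and $l_0=2$ together with the identification $Cl_{\F}(\z{/3})=\langle s,s_2\rangle$ for $\gamma\neq0$; it suffices to compute $Cl_{\F}(\z{/3})$ and, when $\gamma\neq0$, $Cl_{\F}(\z{/9})$.

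First I would read off the power structure of $S=B(3,r;0,\gamma,0)$ from its presentation. Using the splitting $S=\langle s_1,s_2\rangle\rtimes\langle s\rangle$ with $\langle s_1,s_2\rangle$ (almost) homocyclic abelian and $s$ of order $3$, one determines the orders of elements and hence the subgroups $\Omega_1(S)$ and $\Omega_2(S)$. The decisive point is that the relation $s_1^3s_2^3s_3=s_{r-1}^{\gamma}$ is exactly where $\gamma$ enters: it governs which products of the $s_i$ have order $3$, and therefore changes $\Omega_1(S)$. This is the source of the dichotomy between $\gamma=0$ and $\gamma\neq0$.

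Next I would use the exotic fusion data of \cite[Theorem 5.10]{MR2272147}. The hypothesis furnishes an $\F$-Alperin rank-two elementary abelian subgroup $V\cong(\z{/3})^2$; since $V$ is abelian and essential, $\Aut_{\F}(V)=\Out_{\F}(V)\leq GL_2(3)$ contains a strongly $3$-embedded subgroup, and the associated transitivity fuses the order-$3$ subgroups of $V$ to one another. This fusion, together with the conjugation action of $s$ and the remaining $\F$-automorphisms recorded in \cite{MR2272147}, is the engine driving the closure. I would then compute the smallest strongly $\F$-closed subgroup containing $\Omega_{3^l}(S)$ by alternately taking $\F$-images of the subgroups already present and taking the subgroup they generate. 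For $\gamma=0$ I expect $\Omega_3(S)$ and these fusions to already generate all of $S$, giving $Cl_{\F}(\z{/3})=S$ and hence (i). For $\gamma\neq0$ I expect the order-$3$ elements to be confined, after fusion, inside $\langle s,s_2\rangle$; verifying that $\langle s,s_2\rangle$ is strongly $\F$-closed and proper then yields $Cl_{\F}(\z{/3})=\langle s,s_2\rangle\neq S$, so $\Linp$ is not $\bz{3}$-cellular. Passing to $l=2$, the subgroup $\Omega_9(S)$ contains order-$9$ elements lying outside $\langle s,s_2\rangle$ whose $\F$-closure fills out the remaining generators, so $Cl_{\F}(\z{/9})=S$ and $\Linp$ is $\bz{3^l}$-cellular for all $l\geq2$.

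The main obstacle will be the explicit fusion bookkeeping. One must pin down $\Aut_{\F}(V)$ for each essential subgroup from \cite[Theorem 5.10]{MR2272147}, describe the $\F$-conjugacy classes of the order-$3$ and order-$9$ subgroups, and from these compute the strongly $\F$-closed closures exactly. The delicate case is $\gamma\neq0$: one has to show simultaneously that $\langle s,s_2\rangle$ contains $\Omega_3(S)$ up to fusion and that it is genuinely strongly $\F$-closed, i.e. no $\F$-morphism carries any of its subgroups outside it. This depends on the fine structure of the exotic systems and is most safely carried out by an explicit, partly computer-assisted, calculation.
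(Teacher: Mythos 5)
Your reduction is exactly the paper's first step: by Theorem~\ref{t:BP << Lp} (or Corollary~\ref{c:BS << Lp}(d)) everything comes down to computing $Cl_{\F}(\z{/3^l})$, your predicted answers are all correct, and you rightly locate the source of the dichotomy in the relation $s_1^3s_2^3s_3=s_{r-1}^{\gamma}$. But the decisive step --- actually determining the closures --- is not carried out. You defer it to ``explicit fusion bookkeeping'' over the essential subgroups of \cite[Theorem 5.10]{MR2272147}, to be done by iterating $\F$-images and, as you concede, most safely by computer. That deferred computation \emph{is} the proof of the proposition; as written, your proposal establishes nothing beyond the (correct) reduction, and the iterative-closure procedure you sketch is open-ended, since you have no a priori control on which subgroups can arise as the closure.

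The idea you are missing is the structural shortcut that makes the whole thing a short hand computation and removes nearly all fusion input. A strongly $\F$-closed subgroup is automatically normal in $S$ (conjugations by elements of $S$ are morphisms of $\F$), and $s\in Cl_{\F}(\z{/3^l})$ for every $l\geq 1$ because $s$ has order $3$. By the structure of $B(3,r;0,\gamma,0)$ (\cite[Lemma A.10]{MR2272147}), the only proper normal subgroup of $S$ that can then occur is $N=\langle s,s_2\rangle$, of index $3$. So $Cl_{\F}(\z{/3^l})$ is either $N$ or $S$, and the question becomes purely one about element orders: does $S\setminus N$ contain an element of order dividing $3^l$? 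Writing $x=s^is_1^js_2^k$ and using the defining relations, one checks that every element of $S\setminus N$ satisfies $x^9=1$, that such elements of order $3$ exist precisely when $\gamma=0$, and that when $\gamma\neq 0$ all order-$3$ elements already lie in $N$ --- with no fusion involved at all (your expectation that fusion is what ``confines'' the order-$3$ elements inside $N$ is backwards). The only fusion-theoretic input in the entire argument is the verification that $N$ is strongly $\F$-closed, which is read off directly from the automorphism tables in \cite[Theorem 5.10, Lemma A.14]{MR2272147}; no analysis of $\Aut_{\F}(V)$ for the $\F$-Alperin subgroups $V$, no transitivity argument, and no computer assistance is needed.
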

\begin{proof}
For short we denote by $S$ the group $B(3,r;0,\gamma,0)$. By \cref{t:BP << Lp} we are reduced to the computation of $Cl_{\F}(\z{/3^l})$. By \eqref{equation:exotic semidirect product}, one deduces that $s\in Cl_\F(\z/3^l)$ for all $l\ge 1$. By \cite[Lemma A.10]{MR2272147}, the subgroup $N=\langle s,s_2\rangle$ of $S$ is a proper normal subgroup of index $[N : S]=3$ and
 either
 \[
Cl_\F(\z/3^l)=S\quad\text{ or }\quad Cl_\F(\z/3^l)=N
 \]
Then, $Cl_\F(\z/3^l)=S$ if and only if there exists $x\in S\setminus N$ such that $x^{3^l}=1$ for all $l\ge 1$. 

If $\gamma = 0$, then the element $ss_1\in S\setminus N$ is of order $3$. This holds since 
\begin{equation}\label{equation:x^3}
(ss_1^js_2^k)^{3^l}=(s_{r-1})^{\gamma j3^{l-1}}\text{ for any choice of }\gamma.
\end{equation}
In particular $(ss_1)^3=(s_{r-1})^{\gamma}=(s_{r-1})^{0}=1$.

If $\gamma \neq 0$, the same element $ss_1$ is of order $9$, $x^9=(s_{r-1})^{\gamma3}=1$, see \eqref{equation:x^3}. Now we show that there is no element in $S\setminus N$ of order $3$. By \eqref{equation:exotic semidirect product}, any element $x\in S$ can be written as $s^i_{\phantom{0}}s_1^js_2^k$ for $i=0,2,1$. 

First, notice that the case $i=2$ can be dropped since the index of $Cl_\F(\z{/3^l})$ in $S$ is either $1$ or $3$. 

Next, we prove that there is no such $x=s^i_{\phantom{0}}s_1^js_2^k$ with $i=0$. Assume that there exists $x=s_1^js_2^k$ of order $3$. Since $x^{3}=1$, and $s_1$ and $s_2$ commute, then $x^{3}=1$ if and only if $s_1^{3^lj}=s_2^{3^lk}=1$.
In particular, $j|3^m$, see \eqref{equation:exotic semidirect product}. Then $s_1^j=(s_1^3)^{j'}=(s_{r-1}^\gamma s_{3}^{-1}s_{2}^{-3})^{j'}\in N$, which is a contradiction.
 
Finally, the only option is $x=ss_1^js_2^l$. Again, by \eqref{equation:x^3}, $x^3=(s_{r-1})^{\gamma j}$. Therefore, $x^{3}=1$ implies $\gamma j \equiv 0 \mod 3$. But $\gamma\neq 0$, so $j\equiv 0 \mod 3$ and this implies that $s_1^j\in N$ which implies that $x\in N$.
\end{proof}

We will finish this section by describing $CW_{B\mathbb Z/3}(B\mathcal F)$ when $\F$ is an exotic fusion system over $B(3,r;0,\gamma,0)$ with $\gamma \neq 0$ such that $\F$ has at least one $\F$-Alperin rank two elementary abelian $3$-subgroup.

\begin{lemma}\label{l:exotic1connected}
Let $\F$ be an exotic fusion system under the hypothesis of \cref{c:exotic example}, $\Linp$ is simply connected.
\end{lemma}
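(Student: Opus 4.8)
$B\mathcal{F}$ is $1$-connected, where $\mathcal{F}$ is an exotic fusion system over $B(3,r;0,\gamma,0)$ with $\gamma \neq 0$ satisfying the hypotheses of Proposition~\ref{c:exotic example}.

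Let me recall the key tool. By Proposition~\ref{l:hmtpy gps}, $\pi_1(B\mathcal{F}) \cong S/\mathcal{O}^p_{\mathcal{F}}(S)$ where
$$\mathcal{O}^p_{\mathcal{F}}(S) = \langle [Q, \mathcal{O}^p(\mathrm{Aut}_{\mathcal{F}}(Q))] \mid Q \leq S \rangle.$$

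So $B\mathcal{F}$ is $1$-connected if and only if $S = \mathcal{O}^p_{\mathcal{F}}(S)$, i.e., the hyperfocal subgroup is all of $S$.

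**Strategy:** I need to show $\mathcal{O}^p_{\mathcal{F}}(S) = S$, i.e., that the focal/hyperfocal subgroup exhausts $S$. The natural approach is to find enough subgroups $Q$ whose $\mathcal{F}$-automorphisms are large (have order divisible by primes other than $p=3$, giving nontrivial $O^p(\mathrm{Aut}_{\mathcal{F}}(Q))$) so that the commutators $[Q, O^p(\mathrm{Aut}_{\mathcal{F}}(Q))]$ generate $S$.

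Let me think about what's available.

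The key input is the structure theorem [Theorem 5.10, MR2272147]. In those exotic fusion systems over $B(3,r;0,\gamma,0)$, there are $\mathcal{F}$-Alperin (essential/centric-radical) rank-two elementary abelian subgroups $E \cong (\mathbb{Z}/3)^2$ with $\mathrm{Out}_{\mathcal{F}}(E)$ containing $SL_2(3)$ or similar — these have automorphism groups with order divisible by $2$. The existence of at least one such $\mathcal{F}$-Alperin rank-two elementary abelian subgroup is precisely the hypothesis.

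**Plan:**

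First I'd identify, from [MR2272147, Theorem 5.10], the $\mathcal{F}$-essential subgroups and their outer automorphism groups. For an $\mathcal{F}$-Alperin (essential) subgroup $E$, $\mathrm{Out}_{\mathcal{F}}(E)$ has a strongly $3$-embedded subgroup, so $O^{3}(\mathrm{Aut}_{\mathcal{F}}(E))$ is strictly larger than the inner automorphisms — in particular $[E, O^3(\mathrm{Aut}_{\mathcal{F}}(E))] = E$ when $E$ is elementary abelian of rank $2$ and $\mathrm{Out}_{\mathcal{F}}(E) \supseteq SL_2(3)$ acts irreducibly (or with no trivial summand). So each such $E$ is contained in $\mathcal{O}^3_{\mathcal{F}}(S)$.

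Second, I'd use the computation in Proposition~\ref{c:exotic example}: when $\gamma \neq 0$, we have $Cl_{\mathcal{F}}(\mathbb{Z}/3) = \langle s, s_2\rangle = N$. The element $x = ss_1$ has order $9$, hence $x^3 = s_{r-1}^{\gamma} \neq 1$ generates the center. This tells me the elementary abelian subgroups sit inside $N$ together with the "extra" generator coming from an order-9 element. The rank-two elementary abelian $\mathcal{F}$-Alperin subgroup $E$ must contain elements realizing the relevant conjugacy, and since $\mathrm{Aut}_{\mathcal{F}}(E)$ mixes the two $\mathbb{Z}/3$ factors, the focal subgroup picks up directions not already in a single cyclic factor.

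The hard part will be verifying that the commutators $[E, O^3(\mathrm{Aut}_{\mathcal{F}}(E))]$ for the available $\mathcal{F}$-Alperin subgroups, together with the focal subgroup contributions from $S$ itself (via $\mathrm{Aut}_{\mathcal{F}}(S)$ and the larger $\mathcal{F}$-radical subgroups listed in the structure theorem), actually generate all of $S$ and not merely the proper normal subgroup $N = \langle s, s_2\rangle$. Concretely I'd need to check that some automorphism moves $s_1$ nontrivially — i.e., that $s_1$ (or a power generating the "first factor") lies in the hyperfocal subgroup — since $N$ already contains $s, s_2, \ldots, s_{r-1}$ and the only missing generator is $s_1$. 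Because the exotic systems are built precisely so that the rank-two subgroups are essential with $\mathrm{Out}_{\mathcal{F}} \cong SL_2(3)$ or $GL_2(3)$ acting on $E$, and these $E$ are not contained in any proper strongly closed subgroup (their $\mathcal{F}$-conjugates spread across $S$), the focal subgroup cannot be the proper strongly $\mathcal{F}$-closed $N$. I'd pin this down by computing $\mathcal{O}^3_{\mathcal{F}}(S)$ directly using the explicit automorphism data from [MR2272147, Lemma A.14, Theorem 5.10], showing $s_1 \in \mathcal{O}^3_{\mathcal{F}}(S)$ and concluding $\mathcal{O}^3_{\mathcal{F}}(S) = S$, whence $\pi_1(B\mathcal{F}) = 1$.
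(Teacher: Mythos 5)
Your reduction coincides with the paper's: by Proposition \ref{l:hmtpy gps}, $\pi_1(\Linp)\cong S/\OFp$, the subgroup $\OFp$ is strongly $\F$-closed and contains $N=\langle s,s_2\rangle$, so everything comes down to showing $s_1\in\OFp$. But your proposal never actually establishes this, and the one concrete argument you offer in its place fails in exactly the case you restrict to. When $\gamma\neq 0$, Proposition \ref{c:exotic example}(ii) says $Cl_{\F}(\z{/3})=N$; equivalently, \emph{every} element of order $3$ in $S$ lies in the proper strongly $\F$-closed subgroup $N$. Hence every rank-two elementary abelian subgroup $E$, together with all of its $\F$-conjugates, is contained in $N$, and the contributions $[E,\mathcal{O}^3(\Aut_{\F}(E))]\leq E\leq N$ of the $\F$-Alperin elementary abelian subgroups can never produce $s_1$. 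So your claim that these $E$ ``are not contained in any proper strongly closed subgroup'' and that therefore the hyperfocal subgroup cannot equal $N$ is precisely wrong here: the essential rank-two subgroups only see $N$, and strong closure of $\OFp$ alone leaves both possibilities $\OFp=N$ and $\OFp=S$ open.

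The missing idea --- which is the heart of the paper's proof --- is to take $Q=S$ itself in the definition of $\OFp$. By the tables in \cite[Theorem 5.10, Lemma A.14]{MR2272147}, $\Aut_{\F}(S)$ contains an automorphism of order two ($\eta$ and/or $\omega$, with $\eta(s_1)=s_1s_2^{f''}$, $\omega(s_1)=s_1^{-1}s_2^{f''}$); being of order prime to $3$ it lies in $\mathcal{O}^3(\Aut_{\F}(S))$, so the element $s_1^{-1}\omega(s_1)=s_1^{-2}s_2^{f''}$ belongs to $[S,\mathcal{O}^3(\Aut_{\F}(S))]\subseteq\OFp$. Since $s_2$ and $s_1^{3}$ already lie in $N\subseteq\OFp$, this forces $s_1\in\OFp$, hence $\OFp=S$. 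You do gesture at ``contributions from $\Aut_{\F}(S)$'' and at ``computing $\mathcal{O}^3_{\F}(S)$ directly from Lemma A.14,'' but that computation \emph{is} the proof; deferring it leaves a genuine gap, and the heuristic you supply instead points in the wrong direction. (A minor further point: the lemma is stated for all $\gamma$, not only $\gamma\neq0$; the same $\Aut_{\F}(S)$ computation handles both cases.)
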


\begin{proof}
By \cref{prop:hmtpy gps}, the fundamental group of $\Linp$ is 
$\pi_1(\Linp) \cong S/\OFp$.
The subgroup $\OFp$ is a strongly $\F$-closed subgroup of $S$, and the arguments in the proof in \cite[page 1751]{MR2272147} show that it must contain $N=\langle s,s_2\rangle < S$. We will show that $\OFp=S$ by proving that $s_1\in \OFp$. Checking tables in  \cite[Theorem 5.10,Lemma A.14]{MR2272147}, we see that the automorphisms of order two $\eta$ and/or $\omega$ are group elements in $\Aut_{\F}(S)$. By the description given there $\eta(s_1)=s_1^{\phantom{f'}}s_2^{f''}$ and $\omega(s_1)=s_1^{-1}s_2^{f''}$. Then $s_1^{-1}\eta(s_1)=s_1^{-2}s_2^{f''}$ or $s_1^{-1}\omega(s_1)=s_1^{-2}s_2^{f''}$ are elements of $\OFp$, since $s^{\phantom{v}}_2,s_1^3\in N\subset \OFp$ then $s_1\in \OFp$. 
\end{proof}

\begin{prop}\label{l:exotic higherlimits}
Let $\F$ be an exotic fusion system satisfying the hypothesis of \cref{c:exotic example} with $\gamma\neq 0$. Let $N=\langle s,s_2\rangle< S$, then there exists a unique map (up to homotopy) $f\colon B\F\rightarrow \p{(B\Sigma_3)}{3}$ whose kernel is $N$.
\end{prop}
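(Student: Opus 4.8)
The plan is to realize $f$ as the map induced by a fusion preserving homomorphism and to control its existence and uniqueness by higher limits over the orbit category, in the spirit of Step~1 of the proof of Theorem~\ref{t:Dwyer p local}. First I would take $\rho\colon S\to\Sigma_3$ to be the composite $S\twoheadrightarrow S/N\cong\z{/3}\xrightarrow{\mathrm{reg}}\Sigma_3$, whose image is the Sylow $3$-subgroup of $\Sigma_3$ and whose group-theoretic kernel is $N$. Writing $\F'=\F_{\z{/3}}(\Sigma_3)$, so that $\p{(B\Sigma_3)}{3}=B\F'$, I note that $B\F'$ is simply connected by Proposition~\ref{l:hmtpy gps} (one has $\mathcal{O}^p_{\F'}(\z{/3})=\z{/3}$, since $\Aut_{\F'}(\z{/3})=\Aut(\z{/3})$ acts by inversion and $3\nmid 2$) and $\sbz{3}$-null by Remark~\ref{r:bf sigmabzp-null}. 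Then I would verify that $\rho$ is fusion preserving: as $N$ is strongly $\F$-closed (shown in the proof of Proposition~\ref{c:exotic example}), every $\varphi\in\Hom_{\F}(P,Q)$ sends $P\cap N$ into $Q\cap N$, hence descends through $\rho$ to an isomorphism onto its image in $\z{/3}$, and every such isomorphism lies in $\Aut_{\F'}(\z{/3})$ because $\F'$ realizes the full automorphism group of $\z{/3}$.

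Second, since every map $BP\to\Linp$ factors through $\Theta$ by \cite[Theorem~4.4]{MR1992826}, the maps $f_P:=B(\rho|_P)\colon BP\to\p{(B\Sigma_3)}{3}$, with $P$ ranging over the $\F$-centric subgroups, assemble by fusion preservation into a homotopy coherent family over $\OFc$. As the target is $p$-complete, $\map(\Linp,\p{(B\Sigma_3)}{3})\simeq\operatorname{holim}_{\OFc}\map(BP,\p{(B\Sigma_3)}{3})_{f_P}$, and by Wojtkowiak's obstruction theory \cite{MR928836} (equivalently the Bousfield--Kan spectral sequence of this homotopy limit) the existence and the homotopy uniqueness of a map $f$ with $f\circ\Theta\simeq B\rho$ are governed by the higher limits $\lim_{\OFc}^{s}(\Phi_t)$ with $s\geq 1$, where $\Phi_t(P):=\pi_t(\map(BP,\p{(B\Sigma_3)}{3})_{f_P})$. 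By Proposition~\ref{p:mapping space} applied to $\F'$ this component is $\p{(BC_{\Sigma_3}(\rho(P)))}{3}$, namely $\p{(B\Sigma_3)}{3}$ if $P\leq N$ and $\bz 3$ if $P\not\leq N$; hence $\Phi_1(P)=\z{/3}$ for $P\not\leq N$ and is trivial otherwise, while for $t\geq 2$ one has $\Phi_t(P)=\pi_t(\p{(B\Sigma_3)}{3})$ for $P\leq N$ and $\Phi_t(P)=0$ for $P\not\leq N$. (Equivalently, Proposition~\ref{p:exist m and f} already produces such a map into $\p{(B(\Sigma_3\wr\Sigma_{3^m}))}{3}$ for some $m\geq 0$, and the content is that the vanishing below permits $m=0$.)

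The technical heart is the vanishing $\lim_{\OFc}^{s}(\Phi_t)=0$ for all $s\geq 1$. I would filter each $\Phi_t$ by the atomic functors supported on a single $\F$-conjugacy class of $\F$-centric subgroups and use the long exact sequence for higher limits (as in the proof of Theorem~\ref{t:Dwyer p local}) to reduce to the atomic pieces, which by \cite[Proposition~6.1]{MR1154593} vanish in positive degrees when $3\nmid|\Out_{\F}(P)|$ and are otherwise computed by $\Lambda^{\ast}(\Out_{\F}(P);\Phi_t(P))$. The only classes with $3\mid|\Out_{\F}(P)|$ are the $\F$-essential subgroups --- in particular the rank-two elementary abelian subgroups of the hypothesis --- and for these I would read off $\Out_{\F}(P)$ together with the module structure of $\Phi_t(P)$ from \cite[Theorem~5.10, Lemma~A.14]{MR2272147}, separating the classes contained in $N$ from those that are not, to obtain the vanishing. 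This is the step I expect to be the main obstacle, as it requires the explicit computations with $B(3,r;0,\gamma,0)$.

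Granting a map $f$ with $f\circ\Theta\simeq B\rho$, I would finish as follows. For $x\in N$ one has $\rho(x)=1$, so $f|_{B\langle x\rangle}\simeq\ast$ and $N\leq\ker(f)$; by Proposition~\ref{p:kerf strongly} $\ker(f)$ is strongly $\F$-closed, and it is proper because $f\not\simeq\ast$ (indeed $f|_{B\langle s_1\rangle}$ is the nontrivial map $\bz 3\to\p{(B\Sigma_3)}{3}$ induced by the inclusion of the Sylow, since $\rho(s_1)$ generates it). As $[S:N]=3$ is prime, no subgroup lies strictly between $N$ and $S$, whence $\ker(f)=N$. For uniqueness, let $g\colon\Linp\to\p{(B\Sigma_3)}{3}$ satisfy $\ker(g)=N$. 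Then $\ker(g\circ\Theta)=\ker(g)=N$, so by Dwyer--Zabrodsky $g\circ\Theta\simeq B\beta$ for a homomorphism $\beta\colon S\to\Sigma_3$ of group-theoretic kernel $N$; such a $\beta$ is an isomorphism $S/N\cong\z{/3}$ followed by an injection into $\Sigma_3$, hence $\Sigma_3$-conjugate to $\rho$, giving $g\circ\Theta\simeq B\rho\simeq f\circ\Theta$. The uniqueness clause of the obstruction theory ($\lim_{\OFc}^{s}(\Phi_s)=0$, $s\geq 1$) then yields $g\simeq f$.
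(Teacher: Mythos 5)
Your proposal follows the paper's proof almost step for step: the same fusion preserving $\rho\colon S\twoheadrightarrow S/N\cong\mathbb{Z}/3\hookrightarrow\Sigma_3$ (justified, as in the paper, by the strong $\F$-closedness of $N$ via the argument of Proposition~\ref{p:exist m and f}), the same homotopy colimit decomposition of Proposition~\ref{p:L = hocolim} combined with Wojtkowiak's obstruction theory over $\OFc$, the same identification of the coefficient functors through Proposition~\ref{p:mapping space} (components $\p{(B\Sigma_3)}{3}$ for $P\leq N$ and $\bz{3}$ for $P\not\leq N$, so $\Phi_1$ has values $0$ or $\mathbb{Z}/3$), and the same closing arguments: the kernel is exactly $N$ (your route via Proposition~\ref{p:kerf strongly} and $[S:N]=3$ is fine, modulo the harmless slip that $B\langle s_1\rangle$ is $B\mathbb{Z}/3^m$, not $\bz{3}$), and uniqueness follows since any $g$ with $\ker(g)=N$ satisfies $g|_{BS}\simeq B\rho$ by Dwyer--Zabrodsky and one then invokes vanishing of the uniqueness obstructions.

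The genuine gap is the one you flag yourself: the vanishing of the higher limits is never proved, and that vanishing is the actual content of the proposition --- everything surrounding it is formal. The paper closes this step with two concrete observations absent from your sketch. First, by the tables in \cite[Theorem 5.10, Lemma A.14]{MR2272147}, the outer automorphism groups $\Out_{\F}(P)$ that matter here are $SL_2(\mathbb{F}_3)$ or $GL_2(\mathbb{F}_3)$, whose Sylow $3$-subgroups have order exactly $3$; hence \cite[Proposition 6.2(i)]{MR1154593} gives $\lim^{i}_{\OF}F=0$ for $i>1$ for an \emph{arbitrary} functor $F$ into $\mathbb{Z}_{(3)}$-modules. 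This single statement kills all existence obstructions (which live in degrees $\geq 2$) and all uniqueness obstructions except $\lim^{1}\Phi_1$, with no module analysis and no separation of classes inside or outside $N$. Second, for $\lim^{1}\Phi_1$ the module structure enters only through the remark that the values are $0$ or $\mathbb{Z}/3$ and $\Aut(\mathbb{Z}/3)\cong\mathbb{Z}/2$, so every element of order $3$ in $\Out_{\F}(P)$ acts trivially and \cite[Proposition 6.1(ii)]{MR1154593} makes the atomic pieces vanish. Your proposed route (reading off the $\Out_{\F}(P)$-action on each $\Phi_t(P)$ class by class, separating $P\leq N$ from $P\not\leq N$) would presumably also succeed --- note that for $\gamma\neq 0$ every elementary abelian subgroup lies in $N$, where the action on $\pi_t\p{(B\Sigma_3)}{3}$ is trivial because the component is the constant one --- but it is more laborious than the paper's argument, your side claim that only the $\F$-essential subgroups can have $3\mid|\Out_{\F}(P)|$ is not accurate as stated, and in any case the decisive computation is missing from the proposal rather than merely routine.
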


\begin{proof}

The proof of \cref{t:characterization_kernels} shows that the quotient morphism $S\rightarrow S/N\cong \mathbb Z/3$ gives a fusion preserving homomorphism $\rho \colon S\rightarrow \Sigma_3$. We want to show that this morphism extends to a map $f:\Linp\rightarrow (B\Sigma_3)^\wedge_3$.

By \cref{p:L = hocolim}, $\Linp \simeq \p{(\hocolim_{\OFc} \tilde{B}P)}{3}$, where $\tilde{B}P \simeq BP$ for $P \in \F^c$. The fusion preserving property of $\rho$ shows that $B\rho \in \lim_{\OF} [BP,\p{(B\Sigma_3)}{3}]$.  

The obstructions for rigidifying the homotopy commutative diagram in the category of spaces lie in $\lim_{\OFc}^{i+1} \pi_{i}(\map(BP, \p{(B\Sigma_3)}{3})_{\Theta'\circ B\rho|_P})$, for $i \geq 1$ (see \cite{MR928836}). Note that since the $3$-Sylow subgroup of $\Sigma_3$ is abelian,  we have $\pi_{1}(\map(BP, \p{(B\Sigma_3)}{3})_{\Theta'\circ B\rho|_P})$ is abelian being a quotient of $C_{\mathbb Z/3}(\rho(P))$. In fact, it will be trivial or $\mathbb Z/3$ (see \cite[Theorem 6.3]{MR1992826}).

We will show that for any $F\colon \OF\rightarrow \mathbb Z_{(p)}-Mod$, $\lim^{i}_{\OF}F=0$ for $i>1$.
From \cite[Proposition 3.2, Corollary 3.3]{MR1992826}, we are reduced to show that derived limits of atomic functors have the same vanishing property. Note that from \cite[Theorem 5.10,Lemma A.14]{MR2272147},  the relevant outer automorphism groups $\Out_\F(P)$ are $SL_2(\mathbb F_3)$ or  $GL_2(\mathbb F_3)$. In both cases the $3$-Sylow subgroup is of order $3$, and then \cite[Proposition 6.2(i)]{MR1154593} implies the result.

The obstructions to uniqueness lie in  $\lim_{\OFc}^{i} \pi_{i}(\map(BP, \p{(B\Sigma_3)}{3})_{\Theta'\circ B\rho|_P})$, for $i \geq 1$ (see \cite{MR928836}). By the previous paragraph we have to look at the first derived functor of atomic functors with value $\mathbb Z/3$. But since $\Aut(\mathbb Z/3)\cong \mathbb Z/2$, by \cite[Proposition 6.1(ii)]{MR1154593} any element of order $3$ will act trivially on $\mathbb Z/3$. Finally note that if a map $g\colon B\F\rightarrow \p{(B\Sigma_3)}{3}$ has kernel $N$, its restriction $g|_{BS}\colon BS\rightarrow \p{(B\Sigma_3)}{3}$ has to be homotopic to $B\rho$.
\end{proof}

\begin{prop}
Let $\F$ be an exotic fusion system over $B(3,r;0,\gamma,0)$ with $\gamma \neq 0$ such that $\F$ has at least one $\F$-Alperin rank two elementary abelian $3$-subgroup given in \cite[Theorem 5.10]{MR2272147}.  Then there exists a map $f\colon B\F \rightarrow (B\Sigma_3)^\wedge_p$ such that $CW_{B\mathbb Z/3}(B\mathcal F)$ is the \mbox{homotopy fiber of $f$.}
\end{prop}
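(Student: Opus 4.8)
The plan is to take $f\colon B\F \to \p{(B\Sigma_3)}{3}$ to be the map constructed in Proposition~\ref{l:exotic higherlimits}, whose kernel is $N=\langle s,s_2\rangle$, and to check that it satisfies the hypotheses of the remark following Proposition~\ref{p:factor}, for $P=\z{/3}$ and $B\F'=\p{(B\Sigma_3)}{3}$. By Proposition~\ref{c:exotic example} (the case $\gamma\neq0$) we have $N=Cl_\F(\z{/3})=\ker(\p{r}{3})$. First I would record the ambient data: $B\F$ is $1$-connected by Lemma~\ref{l:exotic1connected}, so $\OFp=S$ and the computation in Lemma~\ref{l:s=Cl} forces the Chach\'olski cofibre $C$ to be $1$-connected; hence $\p{(P_{\sbz3}C)}{3}$ is a connected $p$-complete $\sbz3$-null space (Lemma~\ref{l:Sigma null}) to which the kernel theory of Section~\ref{s:ker f} applies, and both $B\F$ and $\p{(B\Sigma_3)}{3}$ are simply connected.

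The decisive structural fact, read off from the proof of Proposition~\ref{c:exotic example}, is that for $\gamma\neq0$ every element of $S$ of order dividing $3$ already lies in $N$ (the elements of $S\setminus N$ all have order $9$). Thus every pointed map $\bz3\to B\F$ is homotopic to some $\Theta\circ Bg$ with $g\in\Hom(\z{/3},S)$ (\cite[Theorem~4.4]{MR1992826}), and $f\circ\Theta\circ Bg\simeq\ast$ since $g(\z{/3})\leq N=\ker\rho$; in other words $f$ kills every map out of $\bz3$. Applied to the cellularization augmentation $c$ and combined with Proposition~\ref{p:map(CW X, Z) = *} (source $\bz3$-cellular, target $\sbz3$-null), this yields $f\circ c\simeq\ast$, and Zabrodsky's lemma on the $p$-completed Chach\'olski fibration produces $\tilde\pi\colon\p{(P_{\sbz3}C)}{3}\to\p{(B\Sigma_3)}{3}$ with $f\simeq\tilde\pi\circ\p{r}{3}$, hence a comparison $\mu\colon\p{CW_{\bz3}(B\F)}{3}\to F_3$ of homotopy fibres. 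For the triviality of the kernel of the factorizing map I would use that $s_1\notin N$ while $\rho(s_1)$ generates the Sylow $\z{/3}\leq\Sigma_3$: if that generator lay in the kernel then $\p{r}{3}|_{B\langle s_1\rangle}\simeq\ast$, contradicting $s_1\notin\ker(\p{r}{3})$.

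The main obstacle is to prove that the homotopy fibre $F_3$ of $f$ is $\bz3$-acyclic, the remaining hypothesis of the remark; once this is in hand, together with the factorization and the trivial kernel above, the remark promotes $\tilde r$ to a homotopy equivalence and yields $CW_{\bz3}(B\F)\simeq F_3$. To produce the factorization $\p{r}{3}\simeq\tilde r\circ f$ itself one applies Zabrodsky's lemma to the fibration $F_3\to B\F\to\p{(B\Sigma_3)}{3}$ (second map $f$) and to $\p{r}{3}$; the mapping-space hypothesis $\map_*(F_3,\Omega\p{(P_{\sbz3}C)}{3})\simeq\ast$ follows from the $\bz3$-acyclicity of $F_3$ since $\Omega\p{(P_{\sbz3}C)}{3}$ is $\bz3$-null, while $\p{r}{3}\circ i\simeq\ast$ (for $i\colon F_3\to B\F$) will come from lifting $i$ through $c$, using that $F_3$ is in fact $\bz3$-cellular. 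I would deduce the $\bz3$-cellularity — and hence $\bz3$-acyclicity — of $F_3$ from the homotopy-colimit model $B\F\simeq\p{(\hocolim_{\OFc}\tilde B)}{3}$ of Proposition~\ref{p:L = hocolim}: as $f|_{BN}\simeq\ast$, the inclusion $BN\to B\F$ lifts to $\lambda\colon BN\to F_3$, and $BN$ — like $BP$ for every $\F$-centric $P\leq N$ — is $\bz3$-acyclic by \cite[Lemma~6.13]{MR1397728}; comparing the fibre map $\mu$ with the Serre spectral sequence of $F_3\to B\F\to\p{(B\Sigma_3)}{3}$ should then identify $F_3$ accordingly. I expect this colimit-and-homology analysis to be the technically hardest step.

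Granting the $\bz3$-acyclicity of $F_3$ and the triviality of the kernel, the remark after Proposition~\ref{p:factor} applies and shows that $\tilde r$ (equivalently $\tilde\pi$, equivalently $\mu$) is a homotopy equivalence, so $CW_{\bz3}(B\F)\simeq F_3$ is the homotopy fibre of $f$. Finally, exactly as in Proposition~\ref{p:factor}(c), Proposition~\ref{p:CW Lin nilp} gives that $CW_{\bz3}(B\F)$ is rationally trivial with finite homotopy groups, hence $p$-complete, so the identification of $CW_{\bz3}(B\F)$ with the homotopy fibre of $f$ holds integrally and the proposition follows.
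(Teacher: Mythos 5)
Your reduction to the remark following Proposition~\ref{p:factor} (with $P=\z{/3}$, $\p{\pi}{3}=f$ and $B\F'=\p{(B\Sigma_3)}{3}$, which is indeed the classifying space of the saturated fusion system $\F_{\z{/3}}(\Sigma_3)$) is reasonable, and several of your steps are sound: the choice of $f$ from Proposition~\ref{l:exotic higherlimits}; the observation that for $\gamma\neq 0$ every element of order $3$ lies in $N$, so $f$ kills every map out of $\bz{3}$; the easy factorization $f\simeq\tilde\pi\circ\p{r}{3}$ through $\p{(P_{\Sigma \bz{3}}C)}{3}$; and the trivial-kernel argument using $s_1\notin N=\ker(\p{r}{3})$. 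The gap is that the two remaining hypotheses of the remark --- the $\bz{3}$-acyclicity of the fibre $F_3$ of $f$, and the null-homotopy $\p{r}{3}\circ i\simeq\ast$ needed even to produce the factorization $\tilde r$ by Zabrodsky's lemma applied to $F_3\to B\F\to\p{(B\Sigma_3)}{3}$ --- are exactly what you never establish. You propose to obtain both from ``$F_3$ is in fact $\bz{3}$-cellular'', but that assertion is essentially the proposition you are trying to prove: asserting that the fibre of $f$ is $\bz{3}$-cellular and fits in this fibration is asserting $CW_{\bz{3}}(B\F)\simeq F_3$. The evidence you offer does not prove it: a lift $\lambda\colon BN\to F_3$ from a $\bz{3}$-acyclic space imposes no condition on $P_{\bz{3}}(F_3)$ (acyclicity does not propagate forward along maps), and ``comparing $\mu$ with the Serre spectral sequence'' is only named, not carried out. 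As written, the argument is circular at its decisive point.

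The paper's proof avoids this trap by never requiring a priori control of $F_3$. After factoring $f$ through $P_{\Sigma \bz{3}}(C)$ (your $\tilde\pi$, the easy direction), it constructs an explicit homotopy inverse $\bar\Theta\colon\p{(B\Sigma_3)}{3}\to\p{(P_{\Sigma \bz{3}}C)}{3}$ as follows: Zabrodsky's lemma is applied to the fibration $BN\to BS\to \bz{3}$ --- whose fibre is the classifying space of a finite $3$-group, hence known to be $\bz{3}$-acyclic, and on which $\p{r}{3}$ restricts trivially because $N=\ker(\p{r}{3})$ by Propositions~\ref{p:Kerf = Cl} and~\ref{c:exotic example} --- producing $\Theta'\colon \bz{3}\to\p{(P_{\Sigma \bz{3}}C)}{3}$; the automorphism $\omega'\in\Out_{\F}(S)$ projecting to the generator of $\Out_{\Sigma_3}(\z{/3})\cong\z{/2}$ makes $\Theta'$ equivariant up to homotopy, so it descends along $B\Sigma_3\simeq(\bz{3})_{h\z{/2}}$ to give $\bar\Theta$; finally, the uniqueness clauses of Zabrodsky's lemma and of Proposition~\ref{l:exotic higherlimits} identify both composites with identities. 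In other words, the paper replaces your fibration with unknown fibre $F_3$ by one whose fibre is a known $3$-group classifying space, and the $\bz{3}$-acyclicity of $F_3$ comes out as a consequence of the resulting equivalence rather than being an input. To salvage your route you would need an independent proof that $F_3$ is $\bz{3}$-acyclic, and no such argument is in sight that does not already amount to the paper's.
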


\begin{proof}
Let $f\colon \Linp\to \p{(\Sigma_3)}{3}$ be the map constructed in \cref{l:exotic higherlimits} with $\ker(f)=Cl_\F(S)$. Precisely because of this, $f\circ ev\simeq *$ where $ev\colon \bigvee_{[\bz3,\Linp]_*} B\mathbb Z/3\rightarrow B\F$. Then $f$ factors through the cofiber $C$ of $ev$ and, since $(B\Sigma_3)^\wedge_3$ is $B\mathbb Z/3$-null, we obtain a factorization of $f$, $f'\colon P_{\Sigma B\mathbb Z/3}(C)\rightarrow (B\Sigma_3)^\wedge_3$ such that the following diagram is homotopy commutative
\[
\begin{tikzcd}
\Linp \arrow[r, Rightarrow, no head] \arrow[d] & \Linp \arrow[d, "\p{r}{3}"] \\
P_{\bz3}(C)  \arrow[r, "f'"]                   & \p{(B\Sigma_3)}{3}.
\end{tikzcd}
\]

The strategy is to construct a homotopy inverse of $f'$, $\bar{\Theta}\colon \p{(B\Sigma_3)}{3}\rightarrow \p{P_{\bz3}(C)}{3}$, up to $3$-completion, which fits in the previous diagram up to homotopy. 

Since $\Sigma_3$ has an abelian normal $3$-Sylow subgroup $\mathbb Z/3$, we have that ${(B\mathbb Z/3)_{h\mathbb Z/2}\rightarrow B\Sigma_3}$ is an equivalence. Consider the fiber sequence $BN\rightarrow BS\rightarrow B\mathbb Z/3$ and the morphism ${\p{r}{3}|_{BS}\colon BS\rightarrow \p{P_{\bz3}(C)}{3}}$, by Zabrodsky's lemma 
\cite[Proposition 3.4]{MR1397728}, $\p{r}{3}|_{BS}$ factors (uniquely up to homotopy) via $\Theta'\colon B\mathbb Z/3\rightarrow \p{P_{\bz3}(C)}{3}$. In order to get $\bar{\Theta}$, we only need check that $\Theta'$ is $\mathbb Z/2$-equivariant up to homotopy. For any $\F$ in the hypothesis of the proposition, note that there is an element in $\omega'\in \Out_\F(S)$ which project to $\omega\in \Out_{\Sigma_3}(\mathbb Z/3)$ (they are called $\eta$ or $\omega$ in the tables \cite[Theorem 5.10]{MR2272147}). Since $\Theta'$ is unique up to homotopy factoring $\p{r}{3}\circ \Theta$, and $\Theta\circ \omega'\simeq \Theta$, it follows $\omega\circ \Theta'\simeq \Theta'$.

Next we check that $\bar{\Theta}$ is a homotopy inverse to $f$. First consider the following homotopy commutative diagram:

\[
\begin{tikzcd}
CW_{B\mathbb Z/3}(B{\F}) \arrow[d]                   &                                                &                                    \\
\Linp  \arrow[d, "r"] \arrow[r,Rightarrow, no head] & \Linp \arrow[d] \arrow[r, Rightarrow, no head] & \Linp \arrow[d, "\p{r}{3}"]        \\
P_{\Sigma B\mathbb Z/3}(C)  \arrow[r, "f'"]          & \p{(B\Sigma_3)}{3}  \arrow[r, "\bar{\Theta}"]  & \p{P_{\Sigma B\mathbb Z/3}(C)}{3}.
\end{tikzcd}
\]

Since $3$-completion on the bottom line also gives a homotopy commutative diagram, uniqueness on Zabrodsky's lemma (see 
\cite[Proposition 3.4]{MR1397728}) shows that $\bar{\Theta}\circ f'$ is $3$-completion. So $\bar{\Theta}\circ \p{(f')}{3}\simeq id$.

Now consider the following homotopy commutative diagram 

\[
\begin{tikzcd}
BK \arrow[r] \arrow[d]             & F \arrow[d]                                                   &                      \\
BS \arrow[r, "\Theta"] \arrow[d]   & \Linp \arrow[d] \arrow[r, Rightarrow, no head]                & \Linp \arrow[d, "f"] \\
B\mathbb Z/3    \arrow[r, "\iota"] & \p{(B\Sigma_3)}{3} \arrow[r, "\p{(f')}{3}\circ \bar{\Theta}"] & \p{(B\Sigma_3)}{3}  
\end{tikzcd}
\]

The map $\p{(f')}{3}\circ \bar{\Theta}$ is determined by its restriction to the Sylow $3$-subgroup $\mathbb Z/3$ by 
\cref{l:exotic higherlimits}. Again $\iota\circ\p{(f')}{3}\circ \bar{\Theta}$ and $\iota$ give homotopy commutative diagrams when placed in the bottom line, then uniqueness on Zabrodsky's lemma (see 
\cite[Proposition 3.4]{MR1397728}) shows that they are homotopic.
\end{proof} 

\bibliographystyle{alpha}
\bibliography{Bibliografia}

\end{document}